\theoremstyle{plain}
\newcommand{\newreptheorem}[2]{\newtheorem*{rep@#1}{\rep@title}\newenvironment{rep#1}[1]{\def\rep@title{#2 \ref*{##1}}\begin{rep@#1}}{\end{rep@#1}}}
\newtheorem{theorem}{Theorem}
\newtheorem*{theorem-non}{Theorem}
\newtheorem*{non-lemma}{Lemma} 
\newtheorem{lemma}[theorem]{Lemma}
\newtheorem{conjecture}[theorem]{Conjecture}
\theoremstyle{definition}
\newtheorem{remark}[theorem]{Remark}
\DeclareMathOperator{\Sub}{Sub}
\DeclareMathOperator{\Tr}{Tr}
\DeclareMathOperator{\RowS}{RowSpace}
\DeclareMathOperator{\cok}{cok}
\DeclareMathOperator{\Sur}{Sur}
\DeclareMathOperator{\Hom}{Hom}
\DeclareMathOperator{\Aut}{Aut}
\DeclareMathOperator{\dkl}{D_{KL}}
\begin{document}
\title{Cohen-Lenstra distribution for sparse matrices with determinantal biasing}
\author{Andr\'as M\'esz\'aros}
\maketitle
\begin{abstract}
Let us consider the following matrix $B_n$. The columns of $B_n$ are indexed with $[n]=\{1,2,\dots,n\}$ and the rows are indexed with $[n]^3$. The row corresponding to $(x_1,x_2,x_3)\in [n]^3$ is given by $\sum_{i=1}^3 e_{x_i}$, where $e_1,e_2,\dots,e_n$ is the standard basis of $\mathbb{R}^{[n]}$. Let $A_n$ be random $n\times n$ submatrix of $B_n$, where the probability that we choose a  submatrix $C$  is proportional to $|\det(C)|^2$.  
   
    Let $p\ge 5$ be a prime. We prove that the asymptotic distribution of the $p$-Sylow subgroup of the cokernel of $A_n$ is given by the Cohen-Lenstra heuristics.

    Our result is motivated by the conjecture that the first homology group of a random two dimensional hypertree is also Cohen-Lenstra distributed. 
\end{abstract}

\section{Introduction}

Let us consider the following matrix $B_n$. The columns of $B_n$ are indexed with $[n]=\{1,2,\dots,n\}$ and the rows are indexed with $[n]^3$. The row corresponding to $(x_1,x_2,x_3)\in [n]^3$ is given by $\sum_{i=1}^3 e_{x_i}$, where $e_1,e_2,\dots,e_n$ is the standard basis of $\mathbb{R}^{[n]}$. We define a random $n$-element subset $X_n$ of $[n]^3$ as follows. For any deterministic $n$-element subset $K$ of $[n]^3$, we set
\[\mathbb{P}(X_n=K)=\frac{|\det(B_n[K])|^2}{\det(B_n^T B_n)}.\]
Here $B_n[K]$ is the submatrix of $B_n$ induced by the set of rows $K$. It follows from the Cauchy-Binet formula that this indeed gives us a probability measure. Note that the law of $X_n$ is a so called determinantal probability measure \cite{lyons2003determinantal,hough2006determinantal}.

The random $n\times n$ matrix $A_n$ is defined as $A_n=B_n[X_n]$.

%We define the random $n\times n$ submatrix $A_n$ of $B_n$ by setting
%\[\mathbb{P}(A_n=C)=\frac{|\det(C)|^2}{\det(B_n^T B_n)}\]
%for all $n\times n$ submatrix $C$ of $B_n$. \footnote{It may happen that there are two distinct choices of $n$ rows of $B_n$ such that they induce the same submatrix, but for the purposes of this definition we consider them distinct. } It follows from the Cauchy-Binet formula that this indeed gives us a probability measure. 

The subgroup of $\mathbb{Z}^{[n]}$ generated by the rows of $A_n$ is denoted by $\RowS(A_n)$. The cokernel $\cok(A_n)$ of $A_n$ is defined as the factor group $\mathbb{Z}^{[n]}/\RowS(A_n)$. Since $\det(A_n)\neq 0$ by the definition of $A_n$, $\cok(A_n)$ is a finite abelian group of order $|\det(A_n)|$.

\begin{theorem}\label{thm1}
Let $5\le p_1<p_2<\dots<p_s$ be primes. Let $\Gamma_{n,i}$ be the $p_i$-Sylow subgroup of $\cok(A_n)$. Let $G_i$ be a finite abelian $p_i$-group. Then
\[\lim_{n\to\infty} \mathbb{P}\left(\bigoplus_{i=1}^s \Gamma_{n,i}\simeq \bigoplus_{i=1}^s G_i\right)=\prod_{i=1}^s\left(|\Aut(G_i)|^{-1} \prod_{j=1}^\infty (1-p_i^{-j}) \right).\]
\end{theorem}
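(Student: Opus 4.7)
The plan is to prove Theorem~\ref{thm1} via the moment method of Ellenberg--Venkatesh--Westerland and M.~M.~Wood. By Wood's joint-moment theorem for Cohen--Lenstra distributions over a fixed finite set of primes, it suffices to show that for every finite abelian group $G$ whose order is a product of primes from $\{p_1,\dots,p_s\}$,
\[
\lim_{n\to\infty}\mathbb{E}\bigl|\Sur(\cok(A_n),G)\bigr|=1;
\]
the joint statement then follows from $|\Sur(H,\bigoplus_i G_i)|=\prod_i|\Sur(H_{p_i},G_i)|$ together with the characterization of Cohen--Lenstra by its surjection moments. A homomorphism $\cok(A_n)\to G$ is the same as a function $\phi:[n]\to G$ annihilating every row of $A_n$, so the expectation becomes $\sum_{\phi\in\Sur([n],G)}\mathbb{P}(X_n\subseteq S_\phi)$ with $S_\phi=\{(x_1,x_2,x_3):\phi(x_1)+\phi(x_2)+\phi(x_3)=0\}$.

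The core computation is a closed-form formula for $\mathbb{P}(X_n\subseteq S_\phi)$. By Cauchy--Binet and the definition of the determinantal measure,
\[
\mathbb{P}(X_n\subseteq S_\phi)=\frac{\det\bigl(B_n[S_\phi]^T B_n[S_\phi]\bigr)}{\det(B_n^T B_n)},
\]
and a direct calculation gives $B_n^T B_n=3n^2 I+6nJ$ (with $J$ the all-ones matrix), hence $\det(B_n^T B_n)=3^{n+1}n^{2n}$. Writing $n_g=|\phi^{-1}(g)|$ and $R_g=\sum_h n_h n_{g-h}$, expanding the outer-product sum $B_n[S_\phi]^T B_n[S_\phi]=\sum_{r\in S_\phi}(e_{x_1}+e_{x_2}+e_{x_3})(e_{x_1}+e_{x_2}+e_{x_3})^T$ directly yields
\[
B_n[S_\phi]^T B_n[S_\phi]=3D_\phi+6Q_\phi,
\]
with $D_\phi$ diagonal, $(D_\phi)_{xx}=R_{-\phi(x)}$, and $(Q_\phi)_{xy}=n_{-\phi(x)-\phi(y)}$. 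Since $Q_\phi$ factors through the quotient $[n]\twoheadrightarrow G$ induced by $\phi$, it has rank at most $|G|$, and the matrix-determinant lemma reduces $\det\bigl(B_n[S_\phi]^T B_n[S_\phi]\bigr)$ to $3^n\prod_g R_{-g}^{n_g}$ times the determinant of an explicit $|G|\times|G|$ matrix $N_\phi$ with $(N_\phi)_{g,h}=\delta_{g,h}+2 R_{-g}^{-1}n_h n_{-g-h}$. On the perfectly balanced partition $n_g\equiv n/|G|$ one finds $R_g=n^2/|G|$ and $N_\phi=I_{|G|}+(2/|G|)J_{|G|}$, so $\det N_\phi=3$ and $\mathbb{P}(X_n\subseteq S_\phi)=|G|^{-n}$ exactly.

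What remains is to sum this over $\phi\in\Sur([n],G)$. Grouping by the fibre-size vector $(n_g)_{g\in G}$, the multinomial count $n!/\prod_g n_g!$ concentrates, after Stirling, around the balanced partition, and $\det(N_\phi)\prod_g R_{-g}^{n_g}$ is a smooth function of $(n_g)$. The contributions from $(n_g)$ within $O(\sqrt{n})$ of the balanced point should combine into a Gaussian integral of total mass $1$, while a moderate-deviation (entropy) estimate controls atypical partitions. Making these uniform estimates sufficiently sharp to extract cancellations is the main technical obstacle. The hypothesis $p\ge 5$ enters essentially here: for $p=3$ every constant map $\phi\equiv c$ with $c\in\mathbb{Z}/3$ is a surjection onto $\mathbb{Z}/3$ that automatically annihilates every row (since each row sum equals $3\equiv 0\pmod 3$), forcing $\mathbb{E}|\Sur(\cok(A_n),\mathbb{Z}/3)|\ge 2$ and breaking Cohen--Lenstra at $p=3$; an analogous small-prime degeneracy arises at $p=2$. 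For $p\ge 5$ no such obstruction exists, the surjection moments converge to $1$, and Theorem~\ref{thm1} follows.
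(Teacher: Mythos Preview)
Your overall strategy---reducing Theorem~\ref{thm1} to the surjection-moment computation of Theorem~\ref{thm2} via Wood's theorem, then evaluating $\mathbb{P}(X_n\subseteq S_\phi)$ by Cauchy--Binet and a low-rank reduction---is exactly the paper's approach. Your use of the matrix-determinant lemma to pass from the $n\times n$ Gram matrix to a $|G|\times|G|$ determinant is equivalent to the paper's invariant-subspace decomposition (Lemma~\ref{lemmafinite}); it is a perfectly valid and arguably cleaner route to the same formula.

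The genuine gap is in the sentence ``a moderate-deviation (entropy) estimate controls atypical partitions.'' A naive entropy bound on the multinomial is \emph{not} sufficient. For a fibre vector $(n_g)$ close to the uniform distribution on a \emph{proper} subgroup $H<G$ (say $n_g\approx n/|H|$ for $g\in H$ and $n_g$ small for $g\notin H$), the multinomial coefficient is only about $|H|^n$ rather than $|G|^n$, but the probability $\mathbb{P}(X_n\subseteq S_\phi)$ is correspondingly inflated to roughly $|H|^{-n}$, so the exponential factors cancel and one is left with a polynomial-scale quantity that must be shown to be $o(1)$. This is the content of the paper's Section~5.1 (equation~\eqref{HneG}): it requires a spectral estimate showing that the $|G_+|\times|G_+|$ matrix $M$ has a small eigenvalue whenever the support of $(n_g)$ meets both $g_i+H$ and $-g_i+H$ for some coset pair (Lemma~\ref{lemmaMbecs}), together with a separate KL-divergence lower bound when it does not (Lemma~\ref{lemmaextralog}). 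Your sketch does not identify this near-subgroup regime at all, and without it the sum over $\phi$ cannot be controlled.

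Relatedly, your account of why $p\ge 5$ is needed is incomplete. The $p=3$ constant-map obstruction you describe is correct (and matches Remark~\ref{remark1}), but the $p=2$ failure is \emph{not} analogous: constant maps to $\mathbb{Z}/2\mathbb{Z}$ do not annihilate the rows. The paper's Remark~\ref{remark2} shows the issue at $p=2$ arises from the highly unbalanced partition $(n-1,1)$. More to the point, in the actual proof the hypothesis $\gcd(|G|,6)=1$ is used structurally: coprimality to $3$ enters in Lemma~\ref{tvvsdkl} (the roots of $z\mapsto z-\bar z^2$ include primitive cube roots of unity, which must be excluded from the range of characters of $G$), and coprimality to $2$ is used in Section~\ref{HneGsec} to pair the nontrivial cosets of $H$ as $\{g_i+H,-g_i+H\}$ and to ensure $-2a\notin H$ for $a\notin H$. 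Simply exhibiting obstructions at $p=2,3$ does not explain why the argument goes through for $p\ge 5$.
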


The distribution appearing in Theorem~\ref{thm1} is the one that appears in the Cohen-Lenstra
heuristics. It was introduced by Cohen and Lenstra \cite{cohen2006heuristics} in a conjecture on the
distribution of class groups of quadratic number fields.  Since then, these distributions and their modified versions also appeared in the setting of random matrices and random graphs, see  \cite{friedman1987distribution,wood2019random,wood2023probability,clancy2015cohen,wood2017distribution} for some classical results, and \cite{recent1,recent2,recent3,recent4,recent5,recent6,recent7} for more recent results. Note that all the results above are for dense matrices, while our random matrix $A_n$ is sparse, each row contains at most $3$ non-zero entries. The only other known examples of such sparse matrices with Cohen-Lenstra behaviour  are the reduced Laplacian of a random $d$-regular graph \cite{meszaros2020distribution}, and the adjacency matrix of a random $d$-regular graph provided that $d$ and $p$ are coprimes \cite{nguyen2018cokernels}. Considering somewhat denser matrices, there are examples of  $n\times n$  random matrices with Cohen-Lenstra limiting behaviour and $O(n\log n)$ non-zero entries \cite{meszaros2024phase,kang2024random}, or with $O(n^{1+\varepsilon})$ non-zero entries \cite{nguyen2022random}.

We prove Theorem~\ref{thm1} by calculating the moments of $\cok(A_n)$.

\begin{theorem}\label{thm2}
    Let $G$ be a finite abelian group such that $|G|$ is coprime to $6$. Then
    \[\lim_{n\to\infty} \mathbb{E}|\Sur(\cok(A_n),G)|=1.\]
\end{theorem}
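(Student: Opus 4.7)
The proof is by the moment method. Every surjection $\cok(A_n)\twoheadrightarrow G$ lifts uniquely to a surjection $\phi:\mathbb{Z}^{[n]}\twoheadrightarrow G$ which vanishes on $\RowS(A_n)$; encoding $\phi$ by $v=(\phi(e_i))_{i=1}^n\in G^n$, the vanishing condition on the row indexed by $(x_1,x_2,x_3)$ is $v_{x_1}+v_{x_2}+v_{x_3}=0$. Letting $S_v$ denote the set of such triples,
\[\mathbb{E}|\Sur(\cok(A_n),G)|=\sum_{\substack{v\in G^n\\\langle v\rangle=G}}\mathbb{P}(X_n\subseteq S_v),\]
and Cauchy--Binet gives $\mathbb{P}(X_n\subseteq S_v)=\det(B_n[S_v]^{\top}B_n[S_v])/\det(B_n^{\top}B_n)$. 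A direct count shows $B_n^{\top}B_n=3n^{2}I+6nJ$ (with $J$ the all-ones matrix), so $\det(B_n^{\top}B_n)=9n^{2}\cdot(3n^{2})^{n-1}=3^{n+1}n^{2n}$.

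Next I would expand $\mathbf{1}[s=0]=|G|^{-1}\sum_{\chi\in\hat G}\chi(s)$ inside $M_v:=B_n[S_v]^{\top}B_n[S_v]$. Splitting each rank-one summand $(e_{x_1}+e_{x_2}+e_{x_3})(e_{x_1}+e_{x_2}+e_{x_3})^{\top}$ into its nine pieces yields the clean Fourier identity
\[M_v=\frac{1}{|G|}\sum_{\chi\in\hat G}\bigl(3S_\chi^{2}D_\chi+6S_\chi u_\chi u_\chi^{\top}\bigr),\quad S_\chi:=\sum_i\chi(v_i),\ u_\chi:=(\chi(v_i))_i,\ D_\chi:=\mathrm{diag}(\chi(v_i)).\]
The trivial-character contribution is $B_n^{\top}B_n/|G|$, independent of $v$. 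For ``balanced'' $v$ in which each $g\in G$ appears among the $v_i$ exactly $n/|G|$ times, $S_\chi=0$ for every nontrivial $\chi$, whence $M_v=B_n^{\top}B_n/|G|$ and $\det M_v=\det(B_n^{\top}B_n)/|G|^n$ exactly. As there are $|G|^n$ tuples $v$ in total, this is already numerically the right answer: if every $v$ contributed the balanced value, $\sum_v\det M_v$ would equal $\det(B_n^{\top}B_n)$ and hence the normalised expectation would be exactly $1$.

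Concretely, write $M_v=|G|^{-1}B_n^{\top}B_n+E_v$ where $E_v$ collects the $\chi\neq\chi_0$ terms, so that $\det M_v=|G|^{-n}\det(B_n^{\top}B_n)\det(I+H_v)$ for $H_v:=|G|(B_n^{\top}B_n)^{-1}E_v$; the task becomes $\mathbb{E}_{v\sim\mathrm{Unif}(G^n)}\det(I+H_v)\to 1$ (the restriction to generating $v$ costs only an exponentially small $O(c^n)$). From $\|(B_n^{\top}B_n)^{-1}\|=O(n^{-2})$ and the typical size $|S_\chi|=O(\sqrt n)$ one has $\|H_v\|=O(n^{-1/2})$ on a high-probability event, validating the expansion $\det(I+H_v)=1+\operatorname{tr}(H_v)+\dots$. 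Each term's expectation factors, by the i.i.d.\ structure of $v$, into products of single-site moments $\mathbb{E}_{v_1\in G}\chi(v_1)^{m}$, which vanish unless $\chi^{m}=\chi_0$. The hypothesis $\gcd(|G|,6)=1$ is exactly the statement that no nontrivial character has order $2$ or $3$, and this is what kills the pairings that would otherwise contribute at leading order, so that every surviving term is genuinely $o(1)$.

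The main obstacle I expect is the tail estimate required to convert this termwise cancellation into honest $L^1$ convergence of $\mathbb{E}\det(I+H_v)$. On the exceptional event where $\|H_v\|$ is not small, the only a priori bound is $|\det(I+H_v)|\le(1+\|H_v\|)^n$, so one must combine sub-Gaussian concentration of each $S_\chi$ around $0$ with a careful union bound over characters to beat this exponential factor. Carrying this out, together with the generating-subset correction, is where I expect the bulk of the technical work to lie.
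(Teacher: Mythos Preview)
Your Fourier setup is correct: the identity for $M_v$ checks out, $\det(B_n^TB_n)=3^{n+1}n^{2n}$, and balanced $v$ give $\det M_v = \det(B_n^TB_n)/|G|^n$ exactly. This is a genuinely different organising principle from the paper's, which instead block-diagonalises $M_v$ according to the type $(n_a)_{a\in G}$ of $v$ (using that $M_v$ commutes with type-preserving permutations of $[n]$) to obtain the exact factorisation $\det M_v=3^n\det(M)\prod_{a} m_a^{n_a-1}$ with $M$ a fixed $|G_+|\times|G_+|$ matrix. After normalisation this reads as $\exp(-n\,\dkl(\nu\|\mu))$ times polynomially bounded factors, where $\nu$ is the empirical law of the $v_i$ and $\mu$ the law of $-(X_1+X_2)$ for $X_i\sim\nu$ i.i.d.; Pinsker-type arguments then localise the whole sum near the $\nu$ with $\dkl(\nu\|\mu)\approx 0$, which a short character computation identifies as uniform measures on subgroups, and Laplace's method finishes the $H=G$ contribution.

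Your assertion that ``the restriction to generating $v$ costs only an exponentially small $O(c^n)$'' is false, however, and this is a genuine gap. For $v$ balanced on a proper subgroup $H$ the same computation gives $\det(I+H_v)=[G{:}H]^n$, while such $v$ have density $[G{:}H]^{-n}$ under the uniform measure on $G^n$; each $H\le G$ therefore contributes roughly $1$, and in fact $\mathbb{E}_{v\sim\mathrm{Unif}(G^n)}\det(I+H_v)=\mathbb{E}|\Hom(\cok A_n,G)|\to|\Sub(G)|$, not $1$. The non-generating vectors are exponentially rare but individually exponentially large, with net effect of order one. Worse, the case your sketch has no mechanism for is \emph{generating} $v$ with all but $O(\log n)$ coordinates inside some proper $H$: for these, $S_\chi$ is of order $n$ whenever $\chi$ is trivial on $H$, so $\|H_v\|$ is not small, the expansion $\det(I+H_v)=1+\operatorname{tr}H_v+\dots$ is unavailable, and sub-Gaussian concentration under the uniform law on $G^n$ is irrelevant because you are summing over a prescribed region of $v$'s rather than averaging. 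The paper spends a full subsection on precisely these near-$H$ generating types, combining Fisher's and Hadamard's inequalities for the small matrix $M$ with a refined KL lower bound; the coprimality to $2$ enters there, through the pairing of cosets $g+H$ with $-g+H$, and not merely as the absence of order-$2$ characters in a trace expansion.
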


It is well known that Theorem~\ref{thm2} implies Theorem~\ref{thm1} by the results of Wood \cite{wood2017distribution} on the moment problem. For $s=1$, we can also use the earlier results of \cite{ellenberg2016homological}.

Note that the condition that $|G|$ is coprime to $6$ can not be omitted, see Remark~\ref{remark1} and Remark~\ref{remark2}.

\subsection{A motivation: Two dimensional hypertrees}

Let $I_n$ be an ${{n-1}\choose 2}\times {{n}\choose 3}$ matrix, where the rows are index with ${[n-1]\choose 2}$, that is, the set of two element subsets of $[n-1]$ and the columns are indexed with ${{[n]}\choose 3}$, that is, the set of three element subsets of $[n]$. Moreover, given a two element subset $A$ of $[n-1]$ and a three element subset $B=\{x_1<x_2<x_3\}$ of $[n]$, we have
\[I_n(A,B)=\begin{cases}
    (-1)^i&\text{if }A=B\backslash\{x_i\},\\
    0&\text{otherwise.}
\end{cases}\]

Let $C_n$ be a random ${{n-1}\choose 2}$ element subset of  ${{[n]}\choose 3}$ such that for any ${{n-1}\choose 2}$ element subset $K$ of  ${{[n]}\choose 3}$, we have
\begin{equation}\label{hypdef}
\mathbb{P}(C_n=K)=\frac{|\det(I_n^T[K])|^2}{\det(I_nI_n^T)}.
\end{equation}

If we add all the subsets of $[n]$ of size $1$ and $2$ to $C_n$, we get a random two dimensional simplicial complex. With a slight abuse of notation, we also use $C_n$ to denote this complex. \break $C_n$ is called a random $2$-dimensional hypertree, and it is the two dimensional analogue of a uniform random spanning tree of the complete graph.  The simplicial complexes which are taken with positive probability are called  $2$-dimensional hypertrees (on the vertex set $[n]$).  It turns out that $\cok(I_n^T[C_n])$ has a topological meaning. It is the first homology group $H_1(C_n)$ of~$C_n$.   Generalising Cayley's formula for the number of spanning trees, Kalai \cite{kalai1983enumeration} proved that
\[\sum_{K} |H_1(K)|^2=n^{{n-2}\choose {2}},\]
where the summation is over the  $2$-dimensional hypertrees on the vertex set $[n]$. Thus, \eqref{hypdef} can be rewritten as
\[\mathbb{P}(C_n=K)=\frac{|H_1(K)|^2}{n^{{n-2}\choose {2}}}.\]

See \cite{kahle2022topology,meszaros2022local,werf2022determinantal,meszaros2023coboundary,meszaros2024bounds,meszaros20242} for results on these random hypertrees. Determinantal random complexes were also investigated in \cite{lyons2009random}.

In \cite{kahle2022topology} the following conjecture was stated:

\begin{conjecture}\label{conj1}
    The $p$-Sylow subgroup of $H_1(C_n)$ also follows the Cohen-Lenstra heuristics. The same should be true if we consider the uniform measure on the set of hypertrees.
\end{conjecture}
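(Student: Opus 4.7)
The plan is to follow the moment strategy used for Theorem~\ref{thm1}. By Wood's theorem \cite{wood2017distribution}, it suffices to prove that $\lim_{n\to\infty}\mathbb{E}|\Sur(H_1(C_n),G)|=1$ for every finite abelian group $G$ with $|G|$ coprime to some fixed small set of primes. Combined with \eqref{hypdef} and Kalai's formula, the determinantal case reduces to establishing
\[\sum_{T}|H_1(T)|^2\,|\Sur(H_1(T),G)|=(1+o(1))\,n^{\binom{n-2}{2}},\]
where the sum runs over $2$-dimensional hypertrees $T$ on $[n]$; the uniform-measure version of the conjecture is the analogous identity without the $|H_1(T)|^2$ factor on the left and with the total number of hypertrees on the right.

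The main step is to expand $|\Sur(H_1(T),G)|$ as a sum over surjections $\phi\colon\mathbb{Z}^{\binom{[n-1]}{2}}\to G$ that vanish on the row space of $I_n^T[T]$, swap the order of summation, and, for each fixed $\phi$, evaluate the restricted sum of $|H_1(T)|^2$ over hypertrees $T$ through which $\phi$ descends. For $\phi=0$ this equals $n^{\binom{n-2}{2}}$ by Kalai; the task is to show that for every non-trivial $\phi$ the restricted sum is small enough that, after summing over the $|G|^{\binom{n-1}{2}}$ choices of $\phi$, only the trivial homomorphism contributes to leading order. A natural tool here is a twisted version of Kalai's higher-dimensional matrix-tree theorem, expressing the restricted sum as a determinant of a Laplacian-type operator twisted by $\phi$.

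The hard part will be bounding the contribution of non-trivial $\phi$. In the $A_n$ model of Theorem~\ref{thm2} the rows are drawn, with determinantal biasing, from the unrestricted set $[n]^3$, so the probability that a single row kills $\phi$ factorises as a product over its three coordinates and cancellations reduce to classical character sums. For hypertrees the chosen triangles must form a hypertree, which imposes global acyclicity constraints absent from the $A_n$ model, and the alternating signs in $I_n$ must be tracked carefully throughout the character sums. A plausible route is to compare $C_n$ with a Poissonised or independent-triangle analogue, controlling the discrepancy via the negative-association properties of determinantal measures; but obtaining the required $o(1)$ bound uniformly over all non-trivial $\phi$ and all admissible $G$ is where essentially new input beyond the techniques used for Theorem~\ref{thm1} will be required.
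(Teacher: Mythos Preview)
The statement you are attempting to prove is \emph{Conjecture}~\ref{conj1}: it is explicitly stated in the paper as an open problem, not as a theorem, and the paper contains no proof of it. There is therefore nothing on the paper's side to compare your proposal against. Indeed, immediately after stating the conjecture the paper notes that the first part is now known to be \emph{false} for $p=2$ \cite{meszaros20242}, so the conjecture as literally stated cannot have a proof; at best one could hope for a version restricted to $p>2$ (or perhaps $p>3$, in parallel with Theorem~\ref{thm1}).

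Your outline is an honest description of the moment strategy, and you correctly identify the genuine obstruction: the global acyclicity constraint defining hypertrees destroys the product structure that makes the $A_n$ model tractable, and your final paragraph openly concedes that ``essentially new input beyond the techniques used for Theorem~\ref{thm1} will be required.'' That concession is exactly right---what you have written is a plan for an attack on an open problem, not a proof. In particular, the step where you would need a ``twisted Kalai matrix-tree theorem'' giving sharp enough bounds on the restricted sum $\sum_T |H_1(T)|^2\mathbbm{1}[\phi\text{ descends}]$ uniformly over all nontrivial $\phi$ is not known, and the comparison with a Poissonised model via negative association is far too coarse to yield the $o(1)$ bounds required here. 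So the gap is not a flaw in your reasoning but simply that the problem is open.
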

For computational evidence of the second conjecture, see \cite{kahle2020cohen}. Note that by now we know that the first part of Conjecture \ref{conj1} is false in the special case of $p=2$~\cite{meszaros20242}. This is nicely aligned with the fact that in Theorem~\ref{thm1} we also need to assume that $p_1\neq 2$.

The matrices $B_n$ and $I_n^T$ has similar structure, but $I_n^T$ is less symmetric which makes things harder. Our results demonstrate that sparse random matrices obtained from determinantal measures can have cokernels that follow the Cohen-Lenstra heuristics. This should make the first part of Conjecture \ref{conj1} more plausible for $p>2$.

\subsection{Outline of the proof}

Note that $\mathbb{E}|\Sur(\cok(A_n),G)|$ is the expected number of vectors $q\in G^n$ such that $A_nq=0$ and the components of $q$ generate $G$. To prove Theorem~\ref{thm2}, we determine the limit of the expected number of such vectors in three steps:
\begin{itemize}
    \item Step 1: We can restrict our attention to vectors $q$ such that the distribution of a uniformly chosen component of $q$ is close to the uniform distribution on some subgroup $H$ of $G$. The contribution of all the other vectors is negligible, see 
    Lemma~\ref{lemmafontos}.
    \item Step 2: It is enough to consider vectors $q$ such that the above mentioned subgroup $H$ is equal to $G$. In other words, we can restrict our attention to vectors $q$ such that the distribution of a uniformly chosen component of $q$ is close to the uniform distribution on~$G$. Let us call these vectors equidistributed. The contribution of the non-equidistributed vectors is negligible. See equation~\eqref{HneG}.
    \item Step 3: Finally, we determine the contribution from the equidistributed vectors. See equation~\eqref{HeG}
\end{itemize}

This three step strategy is quite standard in the literature. For matrices with independent entries these steps can be carried out using discrete Fourier transform, see for example \cite{wood2019random}. However, the entries of $A_n$ are not independent. Thus, in our case we need new and significantly different methods. Namely, the proof of steps 1 and 2 relies on techniques from information theory. More specifically, the notion of Kullback–Leibler divergence and Pinsker's inequality. Step 3 relies on the multivariable Laplace's method.

\section{Acknowledgments}
 The author is grateful to  B\'alint Vir\'ag for his comments. The author thanks the anonymous reviewers for their valuable suggestions.  The author was supported by the NSERC discovery grant of B\'alint Vir\'ag and the KKP 139502 project.

\section{Preliminaries}
\begin{comment}
\subsection{The cokernels of $A_n$ and $R_n$ have the same distribution}\label{AnCn}

Let $U$ be a uniform random $n$-element subset of $[n]^3$. Let $B_n'[U]$ be obtained from $B_n[U]$ by permuting a rows of $B_n[U]$ with a uniform random permutation independent from $U$. Bias $B_n'[U]$ by $|\det(B_n'[U])|^2$ to obtain $A_n'$. Note that $A_n'$ has the same distribution as the random matrix where we permute the rows of $A_n$ with a uniform random permutation independent from~$A_n$. Permuting the rows does not change the cokernel, so $\cok(A_n)$ and $\cok(A_n')$ has the same distribution. Note that $B_n'[U]$ has the same distribution as $Z_n$ conditioned on the event $\mathcal{A}$ that $(Y_{i,1},Y_{i,2},Y_{i,3})\neq (Y_{j,1},Y_{j,2},Y_{j,3})$ for all $i\neq j$. Note that on the event $\mathcal{A}$, we have $\det(Z_n)=0$. So when we bias by $|\det(Z_n)|^2$ it does not matter whether we condition on $\mathcal{A}$ or not. Thus, $R_n$ and $A_n'$ has the same distribution. It follows that $\cok(A_n)$ and $\cok(R_n)$ have the same distribution.

\begin{remark}
    One can prove that with high probability $Z_n$ has around $e^{-3}n$ all-zero columns, and the probability that $Z_n$ is invertible is exponentially small in $n$. On the other hand, $R_n$ is invertible with probability $1$. Thus, biasing by $|\det (Z_n)|^2$ has a drastic effect, the total variation distance of $Z_n$ and $R_n$ is exponentially close to $1$.
\end{remark}
\end{comment}

\subsection{Kullback–Leibler divergence}

Let $\mathcal{X}$ be a finite set and let $\nu$ and $\mu$ be two probability measures on $\mathcal{X}$. The Kullback–Leibler divergence of $\nu$ and $\mu$ is defined as
\[\dkl(\nu||\mu)=\sum_{x\in \mathcal{X}}\nu(x)\log\left(\frac{\nu(x)}{\mu(x)}\right).\]
(If $\nu(x)=0$, then $\nu(x)\log\left(\frac{\nu(x)}{\mu(x)}\right)$ is defined to be $0$. If $\nu(x)\neq 0$ and $\mu(x)=0$ for some $x$, then $\dkl(\nu||\mu)$ is defined to be $+\infty$. Here we use the natural logarithm, but many authors prefer to use the base $2$ logarithm.)

For the proof of the next lemma see \cite[Lemma 2.30]{alajaji2018introduction}.
\begin{lemma}[Gibbs' inequality]\label{Gibbs} $\dkl(\nu||\mu)\ge 0$.
\end{lemma}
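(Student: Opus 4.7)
The plan is to reduce Gibbs' inequality to the elementary scalar bound $\log t \le t-1$ valid for all $t > 0$ (or equivalently to apply Jensen's inequality to the concave function $\log$). First I would dispose of the degenerate cases dictated by the conventions in the definition: if $\nu(x) > 0$ and $\mu(x) = 0$ for some $x$, then $\dkl(\nu||\mu) = +\infty$ by definition and there is nothing to prove; and the terms with $\nu(x) = 0$ contribute zero. So I may restrict the sum to $S = \{x \in \mathcal{X} : \nu(x) > 0\}$ and assume $\mu(x) > 0$ on $S$.

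The core computation is then the short chain
\[
-\dkl(\nu||\mu) \;=\; \sum_{x \in S} \nu(x)\,\log\!\left(\frac{\mu(x)}{\nu(x)}\right) \;\le\; \sum_{x \in S} \nu(x)\left(\frac{\mu(x)}{\nu(x)} - 1\right) \;=\; \sum_{x \in S}\mu(x) \,-\, 1 \;\le\; 0,
\]
where the first inequality is $\log t \le t - 1$ applied pointwise to $t = \mu(x)/\nu(x)$, and the last uses $\sum_{x \in S}\mu(x) \le \sum_{x \in \mathcal{X}}\mu(x) = 1$. Rearranging gives $\dkl(\nu||\mu)\ge 0$.

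As an alternative presentation, Jensen's inequality applied to the concave logarithm and to the probability vector $(\nu(x))_{x \in S}$ yields directly
\[
-\dkl(\nu||\mu) \;=\; \sum_{x \in S} \nu(x)\,\log\!\left(\frac{\mu(x)}{\nu(x)}\right) \;\le\; \log\!\left(\sum_{x \in S}\mu(x)\right) \;\le\; \log 1 \;=\; 0.
\]

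There is essentially no obstacle in this proof; it is a textbook fact. The only care needed is bookkeeping around the conventions $0\log 0 = 0$ and $\dkl(\nu||\mu) = +\infty$ when $\mu$ fails to dominate $\nu$, which is what justifies restricting attention to $S$. If one wanted the sharp form, the equality case of $\log t \le t-1$ (namely $t=1$) gives $\dkl(\nu||\mu) = 0$ iff $\nu = \mu$, but the statement as given needs only the one-sided bound.
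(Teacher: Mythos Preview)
Your proof is correct; it is the standard textbook argument via $\log t \le t-1$ (equivalently Jensen for the concave logarithm), with the degenerate cases handled as the definition prescribes. The paper does not give its own proof of this lemma but simply cites \cite[Lemma 2.30]{alajaji2018introduction}, so there is nothing to compare beyond noting that you have supplied the elementary details the paper outsources to a reference.
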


The next lemma is an easy consequence of the fact that refinement cannot decrease divergence \cite[Lemma 2.33]{alajaji2018introduction}.
\begin{lemma}\label{refineq}
Let $\mathcal{Y}\subset \mathcal{X}$, then
\[\dkl(\nu||\mu)\ge \nu(\mathcal{Y})\log \frac{\nu(\mathcal{Y})}{\mu(\mathcal{Y})}+(1-\nu(\mathcal{Y}))\log \frac{1-\nu(\mathcal{Y})}{1-\mu(\mathcal{Y})}.\]
\end{lemma}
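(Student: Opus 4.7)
The plan is to reduce the inequality to a KL divergence between two Bernoulli-type measures on a two-point space and then invoke the cited fact that refinement does not decrease divergence.

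Concretely, let $f:\mathcal{X}\to\{0,1\}$ be the indicator of $\mathcal{Y}$, and push forward $\nu$ and $\mu$ along $f$ to obtain probability measures $\bar\nu,\bar\mu$ on $\{0,1\}$ with
\[
\bar\nu(1)=\nu(\mathcal{Y}),\quad \bar\nu(0)=1-\nu(\mathcal{Y}),\quad \bar\mu(1)=\mu(\mathcal{Y}),\quad \bar\mu(0)=1-\mu(\mathcal{Y}).
\]
The right-hand side of the claimed inequality is exactly $\dkl(\bar\nu\|\bar\mu)$ by definition, while $\nu$ and $\mu$ are refinements of $\bar\nu$ and $\bar\mu$ along $f$. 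Then applying \cite[Lemma 2.33]{alajaji2018introduction} (refinement does not decrease divergence) gives $\dkl(\nu\|\mu)\ge \dkl(\bar\nu\|\bar\mu)$, which is the desired bound.

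If one prefers a self-contained derivation, the same inequality follows from the log-sum inequality $\sum_i a_i\log(a_i/b_i)\ge\bigl(\sum_i a_i\bigr)\log\bigl(\sum_i a_i/\sum_i b_i\bigr)$ applied separately to the two blocks $\mathcal{Y}$ and $\mathcal{X}\setminus\mathcal{Y}$: summing the two resulting inequalities yields precisely the stated bound.

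There is no real obstacle here; the only care needed is with the degenerate cases $\nu(\mathcal{Y})\in\{0,1\}$ or $\mu(\mathcal{Y})\in\{0,1\}$, which are handled by the conventions $0\log(0/\cdot)=0$ and $\nu\not\ll\mu\Rightarrow\dkl=+\infty$ already fixed before Lemma~\ref{Gibbs}. So I would simply cite the refinement lemma and write one line identifying the right-hand side as $\dkl(\bar\nu\|\bar\mu)$.
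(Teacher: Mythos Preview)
Your proposal is correct and matches the paper's approach exactly: the paper simply states that the lemma is an easy consequence of the fact that refinement cannot decrease divergence, citing \cite[Lemma~2.33]{alajaji2018introduction}, which is precisely your pushforward-to-$\{0,1\}$ argument. Your additional remark about the log-sum inequality is a valid alternative but not needed here.
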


Let 
\[\|\nu-\mu\|_1=\sum_{x\in \mathcal{X}}|\nu(x)-\mu(x)|.\]
For the proof of the next lemma see \cite[Lemma 2.37]{alajaji2018introduction}.\footnote{Note that in \cite{alajaji2018introduction} a different constant is given, because they use the binary logarithm instead of the natural one.}
\begin{lemma}[Pinsker's inequality]\label{Pinsker}
We have
\[\|\nu-\mu\|_1\le \sqrt{2 \dkl(\nu||\mu)}.\]
\end{lemma}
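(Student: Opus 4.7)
The plan is to prove Pinsker's inequality by reducing it to a one-parameter (binary) inequality via Lemma~\ref{refineq}, and then verifying the binary inequality by a direct convexity argument. This is the textbook route, but the two-step structure fits naturally with the tools already introduced.

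First I would set up the reduction. Let $\cY=\{x\in\cX:\nu(x)\ge \mu(x)\}$ and write $a=\nu(\cY)$, $b=\mu(\cY)$. A short calculation shows
\[
\|\nu-\mu\|_1 \;=\; \sum_{x\in\cY}(\nu(x)-\mu(x))+\sum_{x\notin\cY}(\mu(x)-\nu(x)) \;=\; 2(a-b),
\]
since the two sums are equal (both $\nu$ and $\mu$ are probability measures). Lemma~\ref{refineq} then gives
\[
\dkl(\nu\|\mu)\;\ge\; a\log\frac{a}{b}+(1-a)\log\frac{1-a}{1-b}.
\]
So it suffices to show the binary bound
\[
a\log\frac{a}{b}+(1-a)\log\frac{1-a}{1-b}\;\ge\;2(a-b)^2 \qquad (a,b\in[0,1]),
\]
since substituting this in and combining with $\|\nu-\mu\|_1=2(a-b)$ yields $\dkl(\nu\|\mu)\ge \tfrac12\|\nu-\mu\|_1^2$, which is equivalent to the claim.

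For the binary inequality, I would fix $b$ and define
\[
g(a)\;=\;a\log\frac{a}{b}+(1-a)\log\frac{1-a}{1-b}-2(a-b)^2
\]
on $[0,1]$ (with the usual conventions at the endpoints). Direct differentiation gives $g(b)=0$ and $g'(b)=0$, while
\[
g''(a)\;=\;\frac{1}{a}+\frac{1}{1-a}-4\;=\;\frac{1}{a(1-a)}-4\;\ge\;0,
\]
since $a(1-a)\le \tfrac14$. Thus $g$ is convex on $[0,1]$ with a stationary point at $a=b$, so $g\ge g(b)=0$ everywhere, which is exactly the binary inequality.

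The only potentially tricky point is handling the boundary: when $b\in\{0,1\}$ the right-hand side of the original inequality may be $+\infty$ and the derivative computation degenerates; these cases have to be checked separately but reduce to an immediate tautology. The heart of the argument is the convexity computation for $g$, and the only nontrivial insight is the choice $\cY=\{\nu\ge\mu\}$, which is exactly what makes the total variation distance equal to $2(a-b)$ and lets the one-dimensional bound suffice.
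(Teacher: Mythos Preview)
Your argument is correct. The paper does not actually prove Pinsker's inequality; it simply cites \cite[Lemma~2.37]{alajaji2018introduction}. Your proof is the standard textbook derivation, and in fact it dovetails nicely with the paper's presentation because the reduction step is precisely Lemma~\ref{refineq} (the data-processing/refinement inequality) applied to the set $\{x:\nu(x)\ge\mu(x)\}$, so you are using exactly the tool the paper has already set up. The convexity computation for $g$ is clean and correct: $g(b)=g'(b)=0$ and $g''(a)=\frac{1}{a(1-a)}-4\ge 0$, hence $g\ge 0$. One cosmetic remark: in the paper's macro set $\cY$ is not defined, so if this were to be inserted verbatim you would want to write $\mathcal{Y}$ (as Lemma~\ref{refineq} does) or add the macro.
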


\subsection{Expressing the surjective moments}

For any $n\times n$ matrix $C$ with integer entries and a finite abelian group $G$, we have
\[|\Sur(\cok(C),G)|=|\{q\in G^n\,:\,Cq=0\text{ and the components of $q$ generate }G\}|,\]
see for example \cite[Proposition 5.2.]{meszaros2020distribution}.

Thus,
\begin{equation}\label{eqSur}
\mathbb{E}    |\Sur(\cok(A_n),G)|=\sum_q \mathbb{P}(A_n q=0),
\end{equation}
where the summation is over all $q\in G^n$ such that  the components of $q$ generate $G$.

\begin{remark}\label{remark1}
    Let $G=\mathbb{Z}/3\mathbb{Z}$. For $i=1,2$, consider $q_i=(i+3\mathbb{Z},i+3\mathbb{Z},\dots,i+3\mathbb{Z})\in G^n$. Since the sum of the entries is $3$ in each row of $A_n$, we see that $\mathbb{P}(A_nq_1=0\text{ and }A_nq_2=0)=1$. This means that the $3$-Sylow subgroup of $\cok(A_n)$ is never trivial, which excludes that the $3$-Sylow subgroup of $\cok(A_n)$ has Cohen-Lenstra limiting distribution. Also, $\mathbb{E}    |\Sur(\cok(A_n),G)|\ge 2$, so the conclusion of Theorem~\ref{thm2} does not hold for $G=\mathbb{Z}/3\mathbb{Z}$.
\end{remark}
\section{Formulas for fixed $n$}

Let $G$ be a finite abelian group, $q\in G^n$. For $a\in G$, let
\begin{align}n_a&=n_a(q)=|\{i\in [n]\,:\,q_i=a\}|,\\
m_a&=m_a(q)=|\{(i,j)\in [n]^2\,:\,q_i+q_j+a=0\}=\sum_{b\in G} n_bn_{-a-b},\label{madef}\\
G_+&=G_+(q)=\{a\in G\,:\,n_a>0\}\label{Gpdef}.
\end{align}

Let $M=M_q$ be a matrix where the rows and columns are indexed with $G_+$ and the entries are given by
\begin{equation}
    M(a,b)=\begin{cases}
2n_an_{-2a}+m_{a}&\text{for $a=b$,}\\
2\sqrt{n_an_b}\,n_{-a-b}&\text{for $a\neq b$.}
\end{cases}\label{Mdef}
\end{equation}

\begin{lemma}\label{lemmafinite}
We have
\[\mathbb{P}(A_nq=0)=\frac{1}3 n^{-2n}\det(M)\prod_{a\in G_+} m_a^{n_a-1}.\]

Moreover, the matrix $M$ is positive semi-definite.
\end{lemma}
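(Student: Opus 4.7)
The plan is to evaluate $\mathbb{P}(A_n q = 0)$ as a Gram determinant via the Cauchy--Binet formula, and then compute that determinant through an orthogonal decomposition of $\mathbb{R}^n$ aligned with the level sets of $q$.

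First, a direct calculation gives $B_n^T B_n = 3n^2 I_n + 6n J_n$, whose eigenvalues are $3n^2$ (multiplicity $n-1$) and $9n^2$ (simple), so $\det(B_n^T B_n) = 3^{n+1} n^{2n}$. Since $B_n[K]q = 0$ in $G^n$ holds precisely when $K \subseteq T_q := \{(x_1,x_2,x_3)\in[n]^3 : q_{x_1}+q_{x_2}+q_{x_3}=0\}$, summing $\mathbb{P}(X_n=K)$ over such $K$ and applying Cauchy--Binet yields
\[\mathbb{P}(A_n q = 0) \;=\; \frac{\det(B_n[T_q]^T B_n[T_q])}{3^{n+1} n^{2n}}.\]
A short count from the definitions of $n_a, m_a$ shows
\[(B_n[T_q]^T B_n[T_q])_{i,j} \;=\; 3 m_{q_i}\,\delta_{ij} \;+\; 6\, n_{-q_i - q_j},\]
so writing $I_a = \{i \in [n] : q_i = a\}$ for $a\in G_+$, the matrix decomposes as $D + U N U^T$, where $D$ is the diagonal matrix with $D_{i,i} = 3 m_{q_i}$, $U \in \{0,1\}^{n\times G_+}$ is the indicator of the partition $\{I_a\}_{a\in G_+}$, and $N_{a,b} = 6\, n_{-a-b}$.

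The key step is the following orthogonal decomposition. Let $V_\|$ be the subspace spanned by the orthonormal vectors $\{\mathbf{1}_{I_a}/\sqrt{n_a}\}_{a\in G_+}$, and let $V_\perp$ be its orthogonal complement. Because every off-diagonal block of $D + UNU^T$ is constant and $D$ is scalar on each $I_a$-block, both subspaces are invariant: since $U^T v = 0$ for $v \in V_\perp$, the operator acts as $\bigoplus_{a\in G_+} 3 m_a\, I_{n_a-1}$ on $V_\perp$, contributing $\prod_a (3m_a)^{n_a-1}$ to the determinant; on $V_\|$, expressing the operator in the orthonormal basis $\{\mathbf{1}_{I_a}/\sqrt{n_a}\}$ produces exactly the matrix $3M$, by matching the diagonal entries $3m_a + 6 n_a n_{-2a} = 3 M_{a,a}$ and off-diagonals $6\sqrt{n_a n_b}\, n_{-a-b} = 3 M_{a,b}$. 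Using $\sum_{a\in G_+}(n_a-1) = n - |G_+|$ then gives
\[\det(B_n[T_q]^T B_n[T_q]) \;=\; 3^n\, \det(M)\prod_{a\in G_+} m_a^{n_a-1},\]
and dividing by $3^{n+1} n^{2n}$ produces the claimed formula.

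For positive semi-definiteness, assemble the vectors $\mathbf{1}_{I_a}/\sqrt{n_a}$ as the columns of an $n \times |G_+|$ matrix $W$ with $W^T W = I_{|G_+|}$. The same computation as in the $V_\|$-block yields $M = \tfrac{1}{3}\, W^T B_n[T_q]^T B_n[T_q]\, W$, which is PSD as a compression of a Gram matrix. The main obstacle is really just the identification of the $V_\|$-block with $3M$ on the nose; this is what the rather particular definition of $M$ is engineered to give, so verifying it is bookkeeping. Potentially degenerate cases (e.g.\ $m_a = 0$ for some $a\in G_+$) require no separate treatment: in such cases a zero row appears in $B_n[T_q]^T B_n[T_q]$ at every $i \in I_a$, hence in the $a$-row of $M$, forcing both sides to vanish through the $m_a^{n_a-1}$ factor when $n_a \ge 2$ and through $\det(M) = 0$ when $n_a = 1$.
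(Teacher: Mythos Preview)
Your proof is correct and follows essentially the same route as the paper: Cauchy--Binet reduces $\mathbb{P}(A_nq=0)$ to the Gram determinant $\det(B_{n,q}^TB_{n,q})/\det(B_n^TB_n)$, the entries $(B_{n,q}^TB_{n,q})_{i,j}=3m_{q_i}\delta_{ij}+6n_{-q_i-q_j}$ are computed, and the orthogonal decomposition $\mathbb{R}^n=V_\|\oplus V_\perp$ along the level sets of $q$ (your $\mathbf{1}_{I_a}/\sqrt{n_a}$ are the paper's $w_a$, your $V_\perp$ is $\bigoplus_a W_a$) yields the factorisation $3^n\det(M)\prod_a m_a^{\,n_a-1}$. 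The only cosmetic differences are that you compute $\det(B_n^TB_n)=3^{n+1}n^{2n}$ directly from $B_n^TB_n=3n^2I_n+6nJ_n$ rather than as the $q=0$ specialisation, and you phrase the positive semi-definiteness of $M$ as a compression $W^T(\cdot)W$ of a Gram matrix rather than as a restriction to an invariant subspace; both are equivalent here.
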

\begin{proof}
Let 
\[I=\{(x_1,x_2,x_3)\in [n]^3\,:\, q_{x_1}+q_{x_2}+q_{x_3}=0\},\]
and let $B_{n,q}$ be the submatrix of $B_n$ consisting of the rows with index in $I$. It follows from the Cauchy- Binet formula that 
\begin{equation}\label{BqpB}\mathbb{P}(A_nq=0)=\mathbb{P}(X_n\subset I)=\sum_{\substack{K\subset I\\|K|=n}}\frac{|\det(B_n[K])|^2}{\det(B_n^TB_n)}=\frac{\det(B_{n,q}^TB_{n,q})}{\det(B_n^TB_n)}.\end{equation}

Note that
\[B_{n,q}^TB_{n,q}=\sum_{k=1}^3\sum_{\ell=1}^3\sum_{(x_1,x_2,x_3)\in I} e_{x_k}^Te_{x_\ell}.\]
Let $(k,\ell)\in [3]^2$ such that $k\neq \ell$, and let $h$ be the unique element of $[3]\backslash \{k,\ell\}$. For $(i,j)\in [n]^2$, we have
\begin{align*}\left(\sum_{(x_1,x_2,x_3)\in I} e_{x_k}^Te_{x_\ell}\right)(i,j)&=|\{(x_1,x_2,x_3)\in I\,:\, x_k=i,x_\ell=j\}|\\&=|\{x_h\in [n]\,:\,q_i+q_j+q_{x_h}=0\}|=n_{-q_i-q_j}.
\end{align*}
Also, if $i\neq j$, then clearly
\[\left(\sum_{(x_1,x_2,x_3)\in I} e_{x_h}^Te_{x_h}\right)(i,j)=0,\]
and
\begin{align*}\left(\sum_{(x_1,x_2,x_3)\in I} e_{x_h}^Te_{x_h}\right)(i,i)&=|\{(x_1,x_2,x_3)\in I\,:\, x_h=i\}|\\&=|\{(x_k,x_\ell)\in [n]^2\,:\,q_i+q_{x_k}+q_{x_\ell}=0\}|=m_{q_i}.
\end{align*}

Therefore,
\[B_{n,q}^TB_{n,q}(i,j)=\begin{cases}
6n_{-2q_i}+3m_{q_i}&\text{for $i=j$},\\
6n_{-q_i-q_j}&\text{for $i\neq j$}.
\end{cases}\]

For $a\in G_+$, let us define the vector $w_a\in \mathbb{R}^{[n]}$, by
\[w_a(i)=\begin{cases}
\frac{1}{\sqrt{n_a}}&\text{if $q_i=a$,}\\
0&\text{otherwise,}
\end{cases}\]
and let us define the subspace
\[W_a=\{v\in \mathbb{R}^{[n]}\,:\,v(i)=0\text{ for all $i$ such that $q_i\neq a$, and }\langle v,w_a\rangle =0\}.\]

It is straightforward to check that $W_a$ is an invariant subspace of $B_{n,q}^TB_{n,q}$, and restricted to this subspace $B_{n,q}^TB_{n,q}$ is just $3m_{a}$ times the identity. Also note that $\dim W_a=n_a-1$.

The subspace $U$ spanned by $(w_a)_{a\in G_+}$ is also an invariant subspace of $B_{n,q}^TB_{n,q}$. A straightforward calculation gives that the matrix of the restriction  of $B_{n,q}^TB_{n,q}$ to the subspace $U$ in the basis $(w_a)_{a\in G_+}$ is $3M$, where $M$ is the matrix defined above.

For all $a\in G_+$ choose a basis of $W_a$. Together with $(w_a)_{a\in G_+}$ they form a basis of $\mathbb{R}^{[n]}$.  Writing $B_{n,q}^TB_{n,q}$ in this basis, we see that
\begin{equation}\label{eqBq}\det(B_{n,q}^TB_{n,q})=\det(3M)\prod_{a\in G_+} (3m_a)^{n_a-1}=3^n\det(M)\prod_{a\in G_+} m_a^{n_a-1}.
\end{equation}
Choosing $q=0$, the matrix $M$ is just the $1\times 1$ matrix $(3n^2)$, furthermore, $G_+=\{0\}$, $m_0=n^2$, $n_0=n$ and $m_a=n_a=0$ for $a\neq 0$. Thus, \eqref{eqBq} gives us
\begin{equation}\label{eqB}
 \det(B_{n}^TB_{n})=\det(B_{n,0}^TB_{n,0})=3^{n+1}n^{2n}.
\end{equation}

Inserting \eqref{eqBq} and \eqref{eqB} into \eqref{BqpB}, the first statement of the lemma follows.

To see that $M$ is positive semidefinite, observe that $3M$ is obtained from the positive semidefinite operator $B_{n,q}^TB_{n,q}$ by restricting it to an invariant subspace, so $M$ is also positive semidefinite. 
\end{proof}

Now let $\underline{n}=(n_a)_{a\in G}$ be a vector of non-negative integers summing up to $n$. We define $G_+$ and $m_a$ as above, that is,
\begin{equation}\label{defmaGp}
m_a=\sum_{b\in G} n_bn_{-a-b}\quad\text{ and }\quad G_+=\{a\in G\,:\,n_a>0\}.
\end{equation}
We also define the matrix $M$ as in \eqref{Mdef}. 
Let 
\[S=S(\underline{n})=\{q\in G^n\,:\,n_a(q)=n_a\text{ for all } a\in G\},\]
and
\[E=E(\underline{n})=\sum_{q\in S} \mathbb{P}(A_nq=0).\]

Using Lemma \ref{lemmafinite}, we see that
\begin{equation}\label{elsoE}E=\frac{|S|}3 n^{-2n}\det(M)\prod_{a\in G_+} m_a^{n_a-1}=\frac{n!}{\prod_{g\in G} n_a!}\frac{n^{-2n}\det(M)}3\prod_{a\in G_+} m_a^{n_a-1}.
\end{equation}

\begin{lemma}\label{E0}
Assume that there is an $a\in G_+$ such that $m_a=0$, then $E=0$. 
\end{lemma}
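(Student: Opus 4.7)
The plan is to show that the hypothesis $m_a = 0$ for some $a\in G_+$ forces the entire row of $M$ indexed by $a$ to vanish, which gives $\det(M)=0$ and hence $E=0$ directly from formula \eqref{elsoE}. (Note that reading off $E=0$ from the factor $\prod_{a\in G_+} m_a^{n_a-1}$ alone would be insufficient, since if $n_a=1$ then $m_a^{n_a-1}=0^0=1$.)

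First I would exploit the non-negativity of the summands in
\[m_a = \sum_{b\in G} n_b\, n_{-a-b}.\]
If this sum is zero, then every term is zero, so for every $b\in G$ at least one of $n_b,\, n_{-a-b}$ vanishes. Specialising to $b=a$ yields $n_a n_{-2a}=0$, and since $a\in G_+$ means $n_a>0$, we deduce $n_{-2a}=0$. Combined with the assumption $m_a=0$, this gives $M(a,a)=2n_a n_{-2a}+m_a=0$.

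For the off-diagonal entries, I would take an arbitrary $b\in G_+$ with $b\neq a$. Then $n_b>0$, so the observation above forces $n_{-a-b}=0$, and hence $M(a,b)=2\sqrt{n_a n_b}\,n_{-a-b}=0$. Thus the $a$-indexed row of $M$ is identically zero, and so $\det(M)=0$. Substituting into \eqref{elsoE} yields $E=0$, as required.

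I do not anticipate any obstacle here: the content is a short bookkeeping argument, and the only conceptual point is to notice that $m_a=0$ simultaneously controls the diagonal entry $M(a,a)$ (via the explicit $m_a$ summand together with the $b=a$ case giving $n_{-2a}=0$) and every off-diagonal entry in the $a$-row (via the $b\in G_+$ cases giving $n_{-a-b}=0$).
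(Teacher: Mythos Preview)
Your proof is correct and follows essentially the same approach as the paper: both use $m_a=0$ with $m_a=\sum_{b\in G}n_b n_{-a-b}$ to deduce $n_{-a-b}=0$ for all $b\in G_+$, conclude that the $a$-indexed row (equivalently column, since $M$ is symmetric) of $M$ vanishes, and then read off $E=0$ from \eqref{elsoE}. Your extra remark explaining why one cannot simply appeal to the factor $\prod_{a\in G_+} m_a^{n_a-1}$ is a helpful clarification not present in the paper.
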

\begin{proof}
Let $a\in G_+$ such that $m_a=0$. Recall that $m_a=\sum_{b\in G} n_bn_{-a-b}$. Thus, if $m_a=0$, then $n_{-a-b}$ must be $0$ for all $b\in G_+$. Then  all the entries of $M$ in the column indexed with $a$ are $0$. Thus, $\det(M)=0$ and the statement follows from \eqref{elsoE}. 
\end{proof}

We define the probability measures $\nu=\nu_{\underline{n}}$ and $\mu=\mu_{\underline{n}}$ on $G$ by setting
\[\nu(a)=\frac{n_a}{n}\quad\text{ and }\quad\mu(a)=\frac{m_a}{n^2}\quad\text{ for all }a\in G.\]

    Assume that $m_a>0$ for all $a\in G_+$, then \eqref{elsoE} can be rewritten as
    \[E=\frac{n!}{\prod_{a\in G} n_a!} \frac{\det(M)}{3\prod_{a\in G_+} m_a}\prod_{a\in G_+} \mu(a)^{n\nu(a)}. \]
    We can further rewrite this in terms of the Kullback–Leibler divergence as follows
    \begin{equation}\label{eqEealpha}E=\alpha(\underline{n})\frac{\det(M)}{3\prod_{a\in G_+} m_a}\exp\left(-n\dkl(\nu||\mu)\right), \end{equation}
    where
    \[\alpha(\underline{n})=\frac{n!}{\prod_{a\in G} n_a!}\Bigg/\exp\left(n\sum_{a\in G}-\nu(a)\log\nu(a)\right).\]

    For all $\underline{n}$, we have $\alpha(\underline{n})\le 1$. See for example \cite[Lemma 2.2]{csiszar2004information}.

    Note that 
    \[\Tr M= \sum_{a\in G_+}\left(2n_a n_{-2a}+m_a\right)\le 2n\sum_{a\in G} n_a+\sum_{a\in G} m_a=3n^2.\]Combining this with the fact that $M$ is positive semidefinite, we obtain that 
    \[\det(M)\le (\Tr M)^{|G_+|}\le c_G n^{2|G|},\]
    for  $c_G=3^{|G|}$. Thus,
    \begin{equation}\label{Ebecs}
        E\le c_G n^{2|G|}\exp(-n\dkl(\nu||\mu)).
    \end{equation}

\section{Estimating the  Kullback–Leibler divergence}

Let $\nu$ a probability distribution on $G$, let $\mu$ be the distribution of $-(X_1+X_2)$, where $X_1$ and $X_2$ are independent random variables with distribution $\nu$. The next lemma and its proof are similar to \cite[Lemma 4.3]{meszaros2020distribution}.

\begin{lemma}\label{tvvsdkl}
Assume that $|G|$ is not divisible by $3$. There is a constant $C$ depending on $G$ such that for any $\nu$ and $\mu$ like above, there is a subgroup $H$ of $G$ such that for any $a\in G$
\[|\nu_H(a)-\nu(a)|\le C\sqrt{\dkl(\nu||\mu)},\]
moreover, $\nu(G\backslash H)\le C\dkl(\nu||\mu)$.

Here $\nu_H$ is the uniform distribution on $H$.

\end{lemma}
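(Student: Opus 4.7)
The plan is Fourier analysis on $G$, exploiting that if $\mu$ is the law of $-(X_1+X_2)$ with $X_1,X_2$ iid $\sim\nu$, then $\hat\mu(\chi)=\overline{\hat\nu(\chi)}^{2}$ for every character $\chi\in\hat G$. By Pinsker's inequality (Lemma~\ref{Pinsker}), $\varepsilon:=\|\nu-\mu\|_1\le\sqrt{2\dkl(\nu||\mu)}$, which upon Fourier transforming gives
\[\left|\hat\nu(\chi)-\overline{\hat\nu(\chi)}^{2}\right|\le\varepsilon\qquad\text{for every }\chi\in\hat G.\]
All subsequent steps analyze this perturbed fixed-point equation $z\approx\bar z^{2}$.

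Writing $z=\hat\nu(\chi)=re^{i\theta}$ and using the identity $|z-\bar z^{2}|=r\,|e^{3i\theta}-r|$, together with $r\le 1$, one obtains a dichotomy: either $|\hat\nu(\chi)|\le 2\varepsilon$, or $\hat\nu(\chi)$ lies within $O(\varepsilon)$ of one of the three cube roots of unity $1,e^{2\pi i/3},e^{4\pi i/3}$ (the fixed points of $z\mapsto\bar z^{2}$ on the unit circle). The hypothesis $3\nmid|G|$ enters precisely to rule out the two primitive cube roots: using the identity $1-|\hat\nu(\chi)|^2=\sum_a\nu(a)|\chi(a)-\hat\nu(\chi)|^2$, the bound $|\hat\nu(\chi)|\ge 1-O(\varepsilon)$ forces $\nu$ to concentrate with mass $1-O(\varepsilon)$ on a single level set $\chi^{-1}(\omega^*)$, so $\hat\nu(\chi)$ is within $O(\varepsilon)$ of some $|G|$-th root of unity $\omega^*$. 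When $3\nmid|G|$, the minimum distance from the set of $|G|$-th roots of unity to $\{e^{\pm 2\pi i/3}\}$ is a positive constant $d_G>0$, so for $\varepsilon<\varepsilon_0(G)$ the primitive-cube-root case is impossible. (The complementary regime $\dkl(\nu||\mu)\ge\varepsilon_0^{2}/2$ is handled trivially by taking $H=G$ and enlarging $C$.)

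Now define $H^*:=\{\chi\in\hat G:|\hat\nu(\chi)-1|<1/2\}$; by the dichotomy this is precisely the set of characters with $\hat\nu(\chi)=1+O(\varepsilon)$, while every character in its complement satisfies $|\hat\nu(\chi)|\le 2\varepsilon$. I claim $H^*$ is a subgroup of $\hat G$: closure under inversion uses $\hat\nu(\chi^{-1})=\overline{\hat\nu(\chi)}$, while closure under products uses that $\chi\in H^*$ forces $\nu(\{a:\chi(a)\ne 1\})=O(\varepsilon)$ (by comparing real parts and using that non-trivial $|G|$-th roots of unity have real part bounded away from $1$), so for $\chi_1,\chi_2\in H^*$ a union bound gives $\hat\nu(\chi_1\chi_2)=1-O(\varepsilon)$. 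Let $H\le G$ be the subgroup annihilated by $H^*$; then $\widehat{\nu_H}(\chi)=\mathbf{1}[\chi\in H^*]$, so $|\hat\nu(\chi)-\widehat{\nu_H}(\chi)|=O(\varepsilon)$ uniformly in $\chi$, and Fourier inversion yields $|\nu(a)-\nu_H(a)|=O(\varepsilon)=O\!\left(\sqrt{\dkl(\nu||\mu)}\right)$, proving the first inequality.

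For the improvement $\nu(G\setminus H)\le C\cdot\dkl(\nu||\mu)$, I apply Lemma~\ref{refineq} with $\mathcal{Y}=G\setminus H$. Since $H$ is a subgroup, whenever exactly one of $X_1,X_2$ lies outside $H$ then $-(X_1+X_2)\notin H$, giving $\mu(G\setminus H)\ge 2\nu(G\setminus H)(1-\nu(G\setminus H))$. Writing $p=\nu(G\setminus H)$ and using that the binary divergence $q\mapsto p\log(p/q)+(1-p)\log((1-p)/(1-q))$ is increasing in $q$ for $q\ge p$, an elementary expansion shows the right-hand side of Lemma~\ref{refineq} is asymptotic to $(1-\log 2)\,p$ as $p\to 0^+$, and hence is bounded below by a positive multiple of $p$ in a neighbourhood of $0$; this yields $p\le C\cdot\dkl(\nu||\mu)$. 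The main obstacle I anticipate is Step~2---ruling out $\hat\nu(\chi)\approx e^{\pm 2\pi i/3}$---since this is the sole place where the arithmetic input $3\nmid|G|$ is used, and it requires combining the concentration estimate near the unit circle with a quantitative separation between $|G|$-th roots of unity and the primitive cube roots.
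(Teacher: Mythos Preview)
Your proposal is correct and follows essentially the same route as the paper: Pinsker, Fourier transform, analysis of the perturbed fixed-point equation $z\approx\bar z^{2}$, elimination of the primitive cube roots via $3\nmid|G|$, construction of $H$ from the characters with $\hat\nu(\chi)\approx 1$, Fourier inversion for the first bound, and Lemma~\ref{refineq} together with $\mu(G\setminus H)\ge 2p(1-p)$ for the second. The only differences are local and cosmetic: you handle the fixed-point analysis via the polar identity $|z-\bar z^{2}|=r|e^{3i\theta}-r|$ where the paper invokes the nonsingular Jacobian at the four roots, and you prove $H^*\le\hat G$ explicitly to read off $\widehat{\nu_H}=\mathbf{1}_{H^*}$, whereas the paper computes $\widehat{\nu_H}$ directly by showing $H\not\subset\ker\varrho$ for $\varrho\notin\hat G_1$.
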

 \begin{proof}

 Let $\delta=\|\nu-\mu\|_1$. Pinsker's inequality (Lemma \ref{Pinsker}) gives us that $\delta\le \sqrt{2\dkl(\nu||\mu)}$. By choosing the constant $C$ large enough, we may assume that $\delta$ is sufficiently small.
 
 We use discrete Fourier transform, that is, for a character $\varrho\in \hat{G}=\Hom(G,\mathbb{C}^*)$, we define
 \[\hat{\nu}(\varrho)=\sum_{a\in G}\varrho(a)\nu(a)\quad\text{ and }\quad\hat{\mu}(\varrho)=\sum_{a\in G}\varrho(a)\mu(a).\]

Our conditions imply that $\hat{\mu}(\varrho)=\overline{(\hat{\nu}(\varrho))^2}$ and $|\hat{\nu}(\varrho)-\hat{\mu}(\varrho)|\le \delta$. Thus, $z=\hat{\nu}(\varrho)$ satisfies the inequality, $|z-\overline{z^2}|\le \delta$. So let us investigate the function $f(z)=z-\overline{z^2}$. First, we determine the roots of $f$. If $z$ is a root of $f$, then $|z|=|z|^2$, so $|z|$ is either $0$ or $1$. If $|z|=1$ and $z$ is a root, then we have the equation $\arg z\equiv -2\arg z \mod{2\pi}$. Thus, it follows that the four roots of $f$ are $0,1$ and $-\frac{1}2\pm\frac{\sqrt{3}}2 i$. Consider $f$ as a real two variable function, that is, consider $g(a,b)=(\Re f(a+bi),\Im f(a+bi))=(a-a^2+b^2,b+2ab)$. The Jacobian matrix of this map is\[\begin{pmatrix}
    1-2a&2b\\
    2b&1+2a
\end{pmatrix},\] 
which has determinant $1-4(a^2+b^2)$. Thus, one can check that the Jacobian matrix is nonsingular at the roots of $f$. Thus, there are $c$ and $\delta_0>0$ with the property that if $|f(z)|<\delta_0$, then there is a root $z_0$ of $f$ such that $|z-z_0|\le c|f(z)|$. In particular, assuming that $\delta$ is small enough, for any character $\varrho$, we have $|\hat{\nu}(\varrho)-z_0|\le c\delta$ for some root $z_0$ of $f$. Note that since $3$ does not divide $|G|$, the characters of $G$ do not take the values of $-\frac{1}2\pm\frac{\sqrt{3}}2i$. Thus, it follows that $\left|\hat{\nu}(\varrho)-\left(\frac{1}2\pm\frac{\sqrt{3}}2i\right)\right|>\varepsilon$ for some positive $\varepsilon$. Thus, assuming that $\delta$ is small enough, for any character $\varrho\in\hat{G}$, we have either $|\hat{\nu}(\varrho)|\le c\delta$ or $|\hat{\nu}(\varrho)-1|\le c\delta$. 

Let $\hat{G}_1$ be the set of characters $\varrho$ such that $|\hat{\nu}(\varrho)-1|\le c\delta$. Let
\[H=\cap_{\varrho\in \hat{G}_1} \ker \varrho.\]

There exists $\alpha>0$ such that for all $\varrho\in \hat{G}$ and $a\in G\backslash \ker \varrho$, we have $\Re \varrho(a)\le 1-\alpha$. Thus, for $\varrho\in\hat{G}_1$, on the one hand, we have
\[\Re \hat{\nu}(\varrho)\ge 1-|1-\hat{\nu}(\varrho)|\ge 1-c\delta,\]
and on the other hand
\[\Re \hat{\nu}(\varrho)\le 1-\alpha \nu(G\backslash \ker \varrho).\]
Combining these observations, $\nu(G\backslash \ker \varrho)\le \frac{c\delta}{\alpha}$. Thus, it follows that $\nu(G\backslash H)\le  \frac{c\delta|G|}{\alpha}$.

It is clear that $\hat{\nu_H}(\varrho)=1$ for all $\varrho\in \hat{G}_1$. 

Take any $\varrho\in \hat{G}\backslash \hat{G}_1$. We claim that assuming that $\delta$ is small enough $H$ is not a subset of $\ker \varrho$. Indeed, if $H$ was a subset of $\ker \varrho$, then this would imply that
\[|\hat{\nu}(\varrho)|\ge \nu(H)-\nu(G\backslash H)=1-2\nu(G\backslash H)\ge 1- \frac{2c\delta|G|}{\alpha}>c\delta  \]
provided that $\delta$ is small enough, which is a contradiction. Thus, there is an $h\in H$ such that $\varrho(h)\neq 1$. Note that $h+H=H$, so
\[\hat{\nu_H}(\varrho)=\frac{1}{|H|}\sum_{a\in H}\varrho(a)=\frac{1}{|H|}\sum_{a\in H}\varrho(a+h)=\frac{1}{|H|}\varrho(h)\sum_{a\in H}\varrho(a)=\varrho(h)\hat{\nu_H}(\varrho).\]

Since $\rho(h)\neq 1$, it follows that $\hat{\nu_H}(\varrho)=0$. 

Thus, $|\hat{\nu}(\varrho)-\hat{\nu_H}(\varrho)|\le c\delta$ for all $\varrho\in\hat{G}$.

Using the inverse Fourier transform
\[|\nu(a)-\nu_H(a)|=\left|\frac{1}{|G|}\sum_{\varrho\in \hat{G}}\overline{\varrho(a)}(\hat{\nu}(\varrho)-\hat{\nu_H}(\varrho))\right|\le \frac{1}{|G|}\sum_{\varrho\in \hat{G}}|\hat{\nu}(\varrho)-\hat{\nu_H}(\varrho)|\le c\delta\le c \sqrt{2\dkl(\nu||\mu)}.  \]

We have got the first statement of the lemma. To prove the second statement, let 
\[p=\nu( G\backslash H)\quad\text{ and }\quad q=\mu( G\backslash H).\]
Lemma \ref{refineq} gives us that
\begin{equation}\label{eqdklin}\dkl(\nu||\mu)\ge p\log\frac{p}q+(1-p)\log\frac{1-p}{1-q}.\end{equation}

We claim that $q\ge 2p(1-p)$. Indeed, let $X_1$ and $X_2$ be two independent random variables with law $\nu$. Then \[q=\mathbb{P}(-X_1-X_2\notin H)\ge \mathbb{P}(X_1\in H,X_2\notin H)+\mathbb{P}(X_1\notin H,X_2\in H)=2p(1-p).\]

Using the already proven part of the lemma, assuming that $\delta$ is small enough, we may assume that $p$ is small enough. Let us assume that $p\le \frac{1}2$. For $p\le \frac{1}2$, we have
$p\le 2p(1-p)\le q.$ For a fixed $p$, the right hand side of \eqref{eqdklin} as a function of $q$ is monotone increasing on $[p,1]$. Thus, replacing $q$ with $2p(1-p)$, we still get a valid inequality, so
\[\dkl(\nu||\mu)\ge p\log\frac{p}{2p(1-p)}+(1-p)\log\frac{1-p}{1-2p(1-p)}.\]

A simple calculation shows that the right hand side is $0$ at $p=0$ and it has a positive derivative at $p=0$. Thus, there are two positive constants  $c_2$ and $p_0$ such that for all $0\le p\le p_0$, we have
\[\dkl(\nu||\mu)\ge p\log\frac{p}{2p(1-p)}+(1-p)\log\frac{1-p}{1-2p(1-p)}\ge c_2p=c_2\nu( G\backslash H).\]
The statement follows.
\end{proof}

\section{Asymptotic estimates -- The proof of Theorem~\ref{thm2}}

Let 
\[D_n=\left\{\underline{n}\in \mathbb{Z}_{\ge 0}^G\,:\,\sum_{a\in G} n_a=n \text{ and the support of $\underline{n}$ generates $G$}\right\},\]
and
\[D_n'=\{\underline{n}\in D_n\,:\, m_a>0 \text{ for all }a\in G_+\},\]
where $m_a$ and $G_+$ were defined in \eqref{defmaGp}.

Combining \eqref{eqSur} with the definition of $E(\underline{n})$ and $D_n$, we see that
\begin{equation}\label{SurSE}
    \mathbb{E}|\Sur(\cok(A_n),G)|=\sum_{\underline{n}\in D_n} E(\underline{n}).
\end{equation}

Let  \[t_n=2C\sqrt{|G|n\log n}\qquad\text{ and }\qquad r_n=4C|G|\log n,\]
where $C$ is the constant provided by Lemma \ref{tvvsdkl}. 

Let $H$ be a subgroup of $G$, we define $u_H\in \mathbb{R}^G$ as
\[u_H(g)=\begin{cases}
    \frac{1}{|H|}&\text{for $g\in H$},\\
    0&\text{for $g\notin H$}.
\end{cases}\]
For a positive integer $n$, let
\[B(n,H)=\left\{\underline{n}\in D_n'\,:\, \|\underline{n}-nu_H\|_\infty\le t_n\text{ and }\sum_{a\notin H}n_a\le r_n\right\}.\]

Let $\Sub(G)$  be the set of the subgroups of $G$.

\begin{lemma}\label{lemmafontos}
We have
\[\lim_{n\to \infty} \left(\sum_{\underline{n}\in D_n} E(\underline{n})-\sum_{H\in\Sub(G)} \sum_{\underline{n}\in B(n,H)} E(\underline{n})\right)=0. \]

\end{lemma}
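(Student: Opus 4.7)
The plan is straightforward: use Lemma \ref{E0} to discard $D_n\setminus D_n'$, show the sets $B(n,H)$ are pairwise disjoint for all large $n$, and then bound the contribution from $\underline{n}\in D_n'\setminus\bigcup_H B(n,H)$ using Lemma \ref{tvvsdkl} together with the exponential estimate \eqref{Ebecs}. Since $E(\underline{n})=0$ whenever some $m_a=0$ with $a\in G_+$, the first sum in the lemma is unchanged if we restrict to $D_n'$.

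For pairwise disjointness: suppose $\underline{n}\in B(n,H_1)\cap B(n,H_2)$ with $H_1\neq H_2$, and pick $a\in H_1\setminus H_2$. The $\ell^\infty$-bound for $B(n,H_1)$ forces $n_a\geq n/|H_1|-t_n\geq n/|G|-t_n$, while the mass bound for $B(n,H_2)$ forces $n_a\leq r_n$. Since $t_n=2C\sqrt{|G|n\log n}$ and $r_n=4C|G|\log n$ are both $o(n)$, this is impossible for $n$ large. Hence for $n$ sufficiently large
\[\sum_{H\in\Sub(G)}\sum_{\underline{n}\in B(n,H)}E(\underline{n})=\sum_{\underline{n}\in\bigcup_H B(n,H)}E(\underline{n}).\]

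It then remains to show $\sum_{\underline{n}\in D_n'\setminus\bigcup_H B(n,H)}E(\underline{n})\to 0$. Fix such an $\underline{n}$. By the definition of $m_a$, the measure $\mu_{\underline{n}}$ is exactly the distribution of $-(X_1+X_2)$ for i.i.d.\ $X_1,X_2\sim\nu_{\underline{n}}$, so Lemma \ref{tvvsdkl} applies and yields a subgroup $H^*$ with
\[\|\underline{n}-nu_{H^*}\|_\infty\leq nC\sqrt{\dkl(\nu||\mu)}\quad\text{and}\quad\sum_{a\notin H^*}n_a\leq nC\,\dkl(\nu||\mu).\]
Since $\underline{n}\notin B(n,H^*)$, at least one of the two defining inequalities of $B(n,H^*)$ must fail; combining this with the two displayed bounds, the choice of the constants in $t_n$ and $r_n$ ensures that in either case $n\,\dkl(\nu||\mu)\geq 4|G|\log n$. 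Plugging into \eqref{Ebecs} gives $E(\underline{n})\leq c_G n^{2|G|}\cdot n^{-4|G|}=c_G n^{-2|G|}$. Since $|D_n|\leq(n+1)^{|G|}$, the total bad contribution is $O(n^{-|G|})$, which vanishes.

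The only real bookkeeping point is that the thresholds $t_n$ and $r_n$ have been tuned so that failure of either $B(n,H^*)$ condition forces $\dkl(\nu||\mu)$ large enough to beat both the prefactor $n^{2|G|}$ in \eqref{Ebecs} and the polynomial count $|D_n|\leq (n+1)^{|G|}$. Beyond matching these constants, the proof is a direct combination of the tools already assembled, so I do not foresee any serious obstacle.
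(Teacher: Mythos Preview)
Your proposal is correct and follows essentially the same route as the paper: reduce to $D_n'$ via Lemma~\ref{E0}, use Lemma~\ref{tvvsdkl} (in contrapositive form) to see that any $\underline{n}\in D_n'$ lying outside every $B(n,H)$ must have $\dkl(\nu||\mu)>4|G|\log n/n$, and then combine \eqref{Ebecs} with the crude count $|D_n'|\le (n+1)^{|G|}$. Your write-up is in fact slightly more detailed than the paper's---you spell out the disjointness argument and the case split on which $B(n,H^*)$ condition fails---whereas the paper simply states the contrapositive; one cosmetic point is that your choice of $a\in H_1\setminus H_2$ tacitly assumes $H_1\not\subset H_2$, so a ``without loss of generality'' would be cleaner.
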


\begin{proof}
    It follows from Lemma \ref{E0} that
    \[\sum_{\underline{n}\in D_n} E(\underline{n})=\sum_{\underline{n}\in D_n'} E(\underline{n}).\]
    Using Lemma \ref{tvvsdkl} we see that if $\dkl(\nu_{\underline{n}}||\mu_{\underline{n}})\le \frac{4|G|\log n}{n}$ for some $\underline{n}\in D_n'$, then
    $\underline{n}\in B(n,H)$ for some subgroup $H$ of $G$. Let $D_n''=D_n'\backslash \cup_{H\in\Sub(G)} B(n,H)$. We just proved that if $\underline{n}\in D_n''$, then $\dkl(\nu_{\underline{n}}||\mu_{\underline{n}})>\frac{4|G|\log n}{n}$. Combining this with \eqref{Ebecs}, we see that for any $\underline{n}\in D_n'',$, we have
    \[E(\underline{n})\le c_Gn^{2|G|}\exp(-4|G|\log n)=c_G n^{-2|G|} .\]

    Clearly $|D_n''|\le (n+1)^{|G|}$.
    Note that for large enough $n$ the sets $B(n,H)$ are pairwise disjoint. So
    \[0\le \sum_{\underline{n}\in D_n} E(\underline{n})-\sum_{H\in\Sub(G)} \sum_{\underline{n}\in B(n,H)} E(\underline{n})=\sum_{\underline{n}\in D_n''} E(\underline{n})\le |D_n''|c_G n^{-2|G|}\le c_G\left(\frac{n+1}{n^2}\right)^{|G|}.\]
    Thus, the statement clearly follows.
\end{proof}

In Section \ref{HneGsec}, we prove that for any subgroup $H$ of $G$ such that $H\neq G$, we have
\begin{equation}\label{HneG}
 \lim_{n\to\infty} \sum_{\underline{n}\in B(n,H)} E(\underline{n})=0.   
\end{equation}

In Section \ref{secLaplace}, we prove that
\begin{equation}\label{HeG}
 \lim_{n\to\infty} \sum_{\underline{n}\in B(n,G)} E(\underline{n})=1.   
\end{equation}

Combining Lemma \ref{lemmafontos} with the equations \eqref{SurSE}, \eqref{HneG} and \eqref{HeG}, we obtain Theorem~\ref{thm2}.
\subsection{The proof of equation \eqref{HneG}}\label{HneGsec}
Let $H$ be a subgroup of $G$ such that $H\neq G$. Recalling that $|G|$ is odd, let $|G/H|=2t+1$, and choose $g_1,g_2,\dots,g_t$ such that $H,g_1+H,-g_1+H,\dots,g_t+H,-g_t+H$ is a list of all the cosets of $H$. For $1\le i\le t$, let $F_i=(g_i+H)\cup (-g_i+H)$, and let $F_0=H$.

Take an $\underline{n}\in B(n,H)$. As before, let $G_+$ be the support of $\underline{n}$. By reindexing if necessary, we may assume that for some $h$, $G_+$ intersects  $F_1,F_2,\dots,F_h$ and does not intersect $F_i$ for $i>h$. Assuming that $n$ is large enough, we have $H=F_0\subset G_+$. Let $k$ be the number of $1\le i\le h$ such that $G_+$ intersect both $g_i+H$ and $-g_i+H$.

\begin{lemma}\label{lemmaMbecs}
Given this $\underline{n}$, let $M$ be the matrix defined in \eqref{Mdef}. Then
\[\frac{\det(M)}{\prod_{a\in G_+} m_a}=O\left(r_n^{|G|-|H|}\left(\frac{t_n}{n}\right)^k\right).\]
Throughout the paper the hidden constant in the $O$-notation may depend on $G$.
\end{lemma}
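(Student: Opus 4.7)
My plan is to bound $\det(M)$ via a block decomposition of $G_+$ along cosets of $H$. Set $K_i = G_+\cap F_i$ for $1\le i\le h$; then $G_+ = H\sqcup K_1\sqcup\cdots\sqcup K_h$. Since $M$ is positive semidefinite by Lemma~\ref{lemmafinite}, every principal submatrix is positive semidefinite, and iterating Fischer's inequality gives
\[\det(M) \;\le\; \det(M_{HH})\prod_{i=1}^{h}\det(M_{K_iK_i}),\]
where $M_{SS}$ denotes the principal submatrix indexed by $S$. Dividing by $\prod_{a\in G_+}m_a$, it suffices to prove
\[\frac{\det(M_{HH})}{\prod_{H}m_a}=O(1),\qquad \frac{\det(M_{K_iK_i})}{\prod_{K_i}m_a}=O\!\left(\frac{t_n}{n}\right)\ (i\le k),\qquad \frac{\det(M_{K_iK_i})}{\prod_{K_i}m_a}=O(r_n^{2|K_i|})\ (i>k).\]
Multiplying these estimates gives $O\bigl((t_n/n)^k\,r_n^{2\sum_{i>k}|K_i|}\bigr)$, which is the desired bound since $\sum_{i>k}|K_i|\le (h-k)|H|\le t|H|=(|G|-|H|)/2$ and $r_n\ge 1$ for large $n$.

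To analyse the ratios it is convenient to set $\tilde M_S := D_S^{-1} M_{SS} D_S^{-1}$ with $D_S = \mathrm{diag}(\sqrt{m_a})_{a\in S}$, so each ratio above equals $\det\tilde M_S$. For the $H$-block, $n_a = n/|H|+O(t_n)$ for $a\in H$ yields $\tilde M_{HH} = I + (2/|H|)J + E_0$ with $\|E_0\|_\infty = O(t_n/n)$, where $J$ is the all-ones matrix on $H$; since $\det(I+(2/|H|)J)=3$, one gets $\det\tilde M_{HH}=3+o(1)$. For an unpaired block ($i>k$) the assumption $s_{C_{-i}}:=\sum_{c\in C_{-i}}n_c=0$ forces every nonzero summand in $m_a$ and in $M_{K_iK_i}(a,a)=2n_an_{-2a}+m_a$ to involve two factors bounded by $r_n$, so $M_{K_iK_i}(a,a)=O(r_n^2)$. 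Hadamard's inequality for PSD matrices then gives $\det(M_{K_iK_i})\le \prod_{a\in K_i} M_{K_iK_i}(a,a) = O(r_n^{2|K_i|})$, while $\prod_{K_i}m_a\ge 1$ because $\underline n\in D_n'$ ensures every $m_a$ is a positive integer.

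The main obstacle is the paired case, where a naive Hadamard bound only yields $O(1)$ and one must exploit a rank-one near-degeneracy of the cross-coset block. Write $B_{\pm i}=G_+\cap C_{\pm i}$ and define the unit vectors $\hat u_i\in\mathbb{R}^{B_i}$ and $\hat v_{-i}\in\mathbb{R}^{B_{-i}}$ by $\hat u_i(a)=\sqrt{n_a/s_{C_i}}$ and $\hat v_{-i}(b)=\sqrt{n_b/s_{C_{-i}}}$. Using $m_a = 2(n/|H|)s_{C_{-i}}(1+O(t_n/n))$ for $a\in C_i$ paired (the contributions from $b,-a-b\in K$ being of order $r_n^2$), $n_{-a-b}=(n/|H|)(1+O(t_n|H|/n))$ when $-a-b\in H$, and that the remaining entries of $\tilde M_{K_i}$ have size $O(r_n^2|H|/n)=O(t_n/n)$ (this uses $r_n^2/n\ll t_n/n$), one obtains
\[\tilde M_{K_i} \;=\; \begin{pmatrix} I & \hat u_i\hat v_{-i}^{\!\top} \\ \hat v_{-i}\hat u_i^{\!\top} & I \end{pmatrix} + E, \qquad \|E\|_\infty = O(t_n/n).\]
The leading matrix has spectrum $\{0,2\}\cup\{1\}^{|K_i|-2}$, its zero eigenvalue is simple, and $(\hat u_i,-\hat v_{-i})$ spans its kernel. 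Weyl's inequality and positive semidefiniteness give $0\le\lambda_{\min}(\tilde M_{K_i})\le\|E\|_{op}=O(t_n/n)$, while the other eigenvalues stay $O(1)$; hence
\[\det\tilde M_{K_i} \;\le\; \lambda_{\min}(\tilde M_{K_i})\cdot\|\tilde M_{K_i}\|_{op}^{|K_i|-1} = O(t_n/n).\]
Establishing the estimate $\|E\|_\infty = O(t_n/n)$ is the chief technical point and relies on the orders of magnitude $t_n/n=O(\sqrt{\log n/n})$ and $r_n^2/n\ll t_n/n$.
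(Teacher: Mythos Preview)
Your argument is correct and follows essentially the same route as the paper: a Fischer block decomposition of $M$ along the cosets $F_i$, together with the observation that in each paired block the vector with entries $\pm\sqrt{n_a}$ witnesses a small eigenvalue. The only cosmetic differences are that you normalize to $\tilde M=D^{-1}MD^{-1}$ and invoke Weyl's inequality (the paper computes the Rayleigh quotient $v^TM_iv/\|v\|_2^2$ directly), and you place the $r_n$ powers on the unpaired blocks via the diagonal bound $M_{K_iK_i}(a,a)=O(r_n^2)$, whereas the paper puts them on the paired blocks; either bookkeeping yields the stated bound.
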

\begin{proof}
Let $M_i$ be the submatrix of $M$ determined by the rows and columns indexed by $F_i\cap G_+$. Since $M$ is positive semi-definite, by Fisher's inequality (\cite[Theorem 7.8.5]{horn2012matrix}), we have
\begin{equation*}
    \det(M)\le \prod_{i=0}^h \det(M_i).
\end{equation*}

Thus,
\begin{equation}\label{eqFisher}\frac{\det(M)}{\prod_{a\in G_+} m_a}\le \prod_{i=0}^{h}\frac{\det(M_i)}{\prod_{a\in F_i\cap G_+} m_a}.\end{equation}

%For large enough $n$, $M_0$ is an $|H|\times |H|$ positive semi-definite matrix, where the diagonal entries are $O(n^2)$, so $\det(M_0)=O(n^{2|H|})$. Also note that for $a\in H$, we have $m_a=(1+o(1))\frac{n^2}{|H|}$. So
%\[\frac{\det(M_0)}{\prod_{a\in F_0\cap G_+} m_a}=O(1).\]

%Now let $1\le i\le h$. 

%We distinguish two cases.

%First case: $(g_i+H)\cap G_+=\emptyset$ or $(-g_i+H)\cap G_+=\emptyset$. For any $a$, we have 
%\[M_i(a,a)=2n_a n_{-2a}+m_a\le 3m_a.\]

%Using the Hadamard's inequality (\cite[Theorem 7.8.1]{horn2012matrix}), we get that

For any $0\le i\le h$, and $a\in F_i\cap G_+$, we have
\[M_i(a,a)=2n_a n_{-2a}+m_a\le 3m_a.\]

Using the Hadamard's inequality (\cite[Theorem 7.8.1]{horn2012matrix}), we get that

\begin{equation}\label{detMi1}
    \frac{\det(M_i)}{\prod_{a\in F_i\cap G_+} m_a}\le 3^{|F_i\cap G_+|}=O(1).
\end{equation}

Now assume that $1\le i\le h$, and $(g_i+H)\cap G_+$ and $(-g_i+H)\cap G_+$ are both nonempty.  Under these assumptions we can prove a bound stronger than \eqref{detMi1}.

Let
\[s_1=\sum_{a\in g_i+H} n_a\quad \text{ and }\quad s_2=\sum_{a\in -g_i+H} n_a.\]

For $a\in (g_i+H)\cap G_+$, we have
\[
m_a=\sum_{b\in G} n_b n_{-a-b}=\sum_{b\in H} n_b n_{-a-b}+\sum_{b\in -a+H} n_b n_{-a-b}+\sum_{b\in G\backslash(H\cup (-a+H))} n_b n_{-a-b}
.\]

Since $n_b=\frac{n}{|H|}+O(t_n)$ for all $b\in H$, the first sum on the right hand side is 
\[\sum_{b\in H} n_b n_{-a-b}=\sum_{b\in H} \left(\frac{n}{|H|}+O(t_n)\right) n_{-a-b}=\left(\frac{n}{|H|}+O(t_n)\right)\sum_{c\in -g_i+H} n_c=\left(\frac{n}{|H|}+O(t_n)\right)s_2.\]

The second sum is
\[\sum_{b\in -g_i+H} n_b n_{-a-b}=\sum_{b\in -g_i+H} n_b \left(\frac{n}{|H|}+O(t_n)\right)=\left(\frac{n}{|H|}+O(t_n)\right)s_2.\]

The third sum is $O(r_n^2)$. So \[m_a=2\left(\frac{n}{|H|}+O(t_n)\right)s_2+O(r_n^2)=2\frac{n}{|H|}s_2+O(r_nt_n).\]

Recall that $M_i(a,a)=2n_a n_{-2a}+m_a$. Since $|G|$ is odd, $-2a\notin H$, so $n_a,n_{-2a}\le r_n$ and \[M_i(a,a)=2\frac{n}{|H|}s_2+O(r_nt_n)+O(r_n^2)=2\frac{n}{|H|}s_2+O(r_nt_n).\]

Similarly, if $a\in (-g_i+H)\cap G_+$, then \[M_i(a,a)=2\frac{n}{|H|}s_1+O(r_nt_n).\]

Regarding the off diagonal entries if $a\in (g_i+H)\cap G_+$ and $b\in (-g_i+H) \cap  G_+$, then
\[M_i(a,b)=M_i(b,a)=2\sqrt{n_a n_b}\left(\frac{n}{|H|}+O(t_n)\right)=2\sqrt{n_a n_b}\frac{n}{|H|}+O(r_nt_n).\]
If $a,b\in (g_i+H)\cap G_+$ and $a\neq b$, then
\[M_i(a,b)=O(r_n^2)=O(r_nt_n).\]
The same is true if $a,b\in (-g_i+H)\cap G_+$ and $a\neq b$.

Let us define the vector $v\in \mathbb{R}^{F_i\cap G_+}$ by setting
\[v(a)=\begin{cases}
\sqrt{n_a}&\text{for $a\in (g_i+H)\cap G_+$},\\
-\sqrt{n_a}&\text{for $a\in (-g_i+H)\cap G_+$}.
\end{cases}\]

Using the Cauchy-Schwarz inequality
\[\left(\sum_{a\in F_i\cap G_+}|v(a)|\right)^2\le \|v\|_2^2 |F_i\cap G_+|.\]

We have
\begin{align*}v^TM_iv&=2\sum_{a\in g_i+H}\sum_{b\in -g_i+H}-2n_an_b\frac{n}{|H|}+\sum_{a\in g_i+H}2n_a\frac{n}{|H|} s_2+\sum_{a\in -g_i+H}2n_a\frac{n}{|H|} s_1\\&\qquad+O(r_nt_n)\sum_{a\in F_i\cap G_+}\sum_{b\in F_i\cap G_+} |v(a)||v(b)|\\&=-4s_1s_2\frac{n}{|H|}+2s_1s_2\frac{n}{|H|}+2s_1s_2\frac{n}{|H|}+O(r_nt_n)\left(\sum_{a\in F_i\cap G_+}|v(a)|\right)^2\\&=O(r_nt_n)\|v\|_2^2.
\end{align*}

Thus, \[\frac{v^TM_iv}{\|v\|_2^2}=O(r_nt_n).\]

Therefore, the smallest eigenvalue of $M_i$ is at most $O(t_nr_n)$. (See \cite[Theorem 4.2.2]{horn2012matrix}.) All the other eigenvalues are at most $\Tr(M_i)=O(nr_n)$. So $\det(M_i)=O(n^{|F_i\cap G_+|-1}r_n^{|F_i\cap G_+|}t_n)$. Recall that for $a\in F_i\cap (g_i+H)$, we have
\[m_a=2\frac{n}{|H|}s_2+O(r_nt_n)\ge \frac{n}{|H|}\]
for large enough $n$. The same is true for $a\in F_i\cap (-g_i+H)$. 

Thus, for large enough $n$, we have
\begin{equation}
    \label{detMi2}\frac{\det(M_i)}{\prod_{a\in F_i\cap G_+} m_a}\le \frac{O(n^{|F_i\cap G_+|-1}r_n^{|F_i\cap G_+|}t_n)}{\left({n}/{|H|}\right)^{|F_i\cap G_+|}}\le O\left(r_n^{|F_i\cap G_+|}\frac{t_n}n\right).
\end{equation}

Combining \eqref{eqFisher} with \eqref{detMi1} and \eqref{detMi2}.
\end{proof}

\begin{lemma}\label{lemmaextralog}
For any large enough $n$ the following holds. Let $\underline{n}\in B(n,H)$. Assume that there is an $a\in G_+$ such that $(-a+H)\cap G_+=\emptyset$. Then
\[\dkl(\nu_{\underline{n}}||\mu_{\underline{n}})\ge \frac{\log n}{2n}.\]
\end{lemma}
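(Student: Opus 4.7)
The plan is to apply Lemma~\ref{refineq} with the singleton set $\mathcal{Y}=\{a\}$. The hypothesis $(-a+H)\cap G_+=\emptyset$ will force $\mu_{\underline{n}}(a)$ to be unusually small, of order $O((\log n)^2/n^2)$ rather than the generic $O((\log n)/n)$, while $\nu_{\underline{n}}(a)\ge 1/n$ trivially. The extra factor of $\log n$ in the ratio $\nu(a)/\mu(a)$ is what produces the extra $\log n$ in the divergence.

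First I would observe that $a\notin H$: since $\underline{n}\in B(n,H)$ gives $n_b\ge n/|H|-t_n>0$ for every $b\in H$ once $n$ is large (because $t_n=o(n)$), we have $H\subseteq G_+$; if $a\in H$ then $-a+H=H$, contradicting $(-a+H)\cap G_+=\emptyset$. Next I would bound $m_a=\sum_{b\in G} n_b n_{-a-b}$ by splitting according to whether $b\in H$, $b\in -a+H$, or neither. The sums over $b\in H$ and $b\in -a+H$ both vanish by hypothesis (in the first case $n_{-a-b}=0$ because $-a-b\in -a+H$; in the second case $n_b=0$ directly). For $b$ outside both cosets, the identity $-H=H$ gives $-a-b\in H\iff b\in -a+H$, so $-a-b\notin H$, and hence both $n_b\le r_n$ and $n_{-a-b}\le r_n$. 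Summing over at most $|G|$ such indices yields $m_a\le |G|\, r_n^2$ and so $\mu_{\underline{n}}(a)\le |G|\,r_n^2/n^2$.

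To conclude, I apply Lemma~\ref{refineq} with $\mathcal{Y}=\{a\}$, setting $p=\nu_{\underline{n}}(a)$ and $q=\mu_{\underline{n}}(a)$. I use the elementary inequality $(1-p)\log((1-p)/(1-q))\ge -p$ (immediate from $(1-p)\log(1-p)\ge -p$, obtained by noting that $f(p)=(1-p)\log(1-p)+p$ satisfies $f(0)=0$ and $f'(p)=-\log(1-p)\ge 0$, together with $-\log(1-q)\ge 0$). This gives
\[\dkl(\nu_{\underline{n}}||\mu_{\underline{n}})\ \ge\ p\bigl[\log(p/q)-1\bigr].\]
Since $p\ge 1/n$ and $p/q\ge n/(|G|\,r_n^2)$, we have $\log(p/q)\ge \log n-2\log r_n-\log|G|=\log n-O(\log\log n)$, so $\log(p/q)-1\ge (\log n)/2$ for $n$ large. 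Combined with $p\ge 1/n$, this yields $\dkl(\nu_{\underline{n}}||\mu_{\underline{n}})\ge (\log n)/(2n)$, as required.

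\textbf{Main obstacle.} The only delicate point is obtaining the sharp bound $m_a=O((\log n)^2)$; the naive estimate $m_a\le (\max_b n_b)\sum_{b\notin H} n_b$ gives only $O(n\log n)$, which is not enough (it would yield a KL bound of order $1/n$, missing the $\log n$ factor entirely). The improvement uses crucially that both indices $b$ and $-a-b$ are simultaneously forced off $H$ -- one by hypothesis, the other through the symmetry $-H=H$ -- so both factors carry the small penalty $r_n$ rather than just one.
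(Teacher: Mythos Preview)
Your proof is correct and follows essentially the same route as the paper: apply Lemma~\ref{refineq} with $\mathcal{Y}=\{a\}$, bound $m_a\le |G|\,r_n^2$ via the same three-coset split of the sum, and conclude from $p\ge 1/n$ together with $\log(p/q)\ge \log n - O(\log\log n)$. The only cosmetic difference is in handling the second term $(1-p)\log\frac{1-p}{1-q}$: the paper uses $\log(1-p)\ge -(2\log 2)p$ on $[0,\tfrac12]$ (and so needs the observation $p\le r_n/n<\tfrac12$), whereas your inequality $(1-p)\log(1-p)\ge -p$ holds on all of $[0,1]$ and is marginally cleaner.
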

\begin{proof}
We drop the index of $\nu$ and $\mu$. Note that

\[m_a=\sum_{b\in G} n_b n_{-a-b}=\sum_{b\in H} n_b n_{-a-b}+\sum_{b\in -a+H} n_b n_{-a-b}+\sum_{b\in G\backslash(H\cup (-a+H))} n_b n_{-a-b}.\]

If $b\in H$, then $-a-b\in -a+H$, so $n_{-a-b}=0$ by our assumption that $(-a+H)\cap G_+=\emptyset$. So the first sum above is just $0$. If $b\in -a+H$, then $n_b=0$, so the second sum is also $0$. Finally, if $b\in G\backslash(H\cup (-a+H))$, then $b\notin H$ and $-a-b\notin H$, so $n_b,n_{-a-b}\le r_n$. Thus, the third sum is at most $|G|r_n^2$.

Therefore, $\mu(a)\le \frac{|G|r_n^2}{n^2}$. Also clearly $\frac{r_n}{n}\ge\nu(a)\ge \frac{1}{n}$.
Let 
\[p=\nu(a)\quad\text{ and }\quad q=\mu(a).\]
Lemma \ref{refineq} gives us that
\begin{equation*}\dkl(\nu||\mu)\ge p\log\frac{p}q+(1-p)\log\frac{1-p}{1-q}.\end{equation*}
For fixed $p$, the right hand side as a function of $q$ is monotone decreasing on $[0,p]$. For  $n$ large enough, we have $q\le \frac{|G|r_n^2}{n^2}\le \frac{1}{n}\le p$. Thus, we can replace $q$ with $\frac{|G|r_n^2}{n^2}$ to obtain that
\[\dkl(\nu||\mu)\ge p\log\frac{pn^2}{|G|r_n^2}+(1-p)\log\frac{1-p}{1-\frac{|G|r_n^2}{n^2}}.\]
%Note that $\log\frac{pn^2}{r_n^2}\ge 0$, so \[p\log\frac{pn^2}{r_n^2}\ge \frac{1}n \log\frac{pn^2}{r_n^2}\ge p\log\frac{n}{r_n^2}.\]
We have
\[p\log\frac{pn^2}{|G|r_n^2}\ge p\log\frac{n}{|G|r_n^2}.\]
Note that $\log(1-p)\ge (-2\log2)p$ on $[0,0.5]$. Since for large enough $n$, we have $p\in [0,0.5]$, we see that 
\[(1-p)\log\frac{1-p}{1-\frac{|G|r_n^2}{n^2}}\ge (1-p)\log(1-p)\ge -(2\log 2)(1-p)p\ge -(2\log 2)p.\]

Combining these bounds, we get
\[\dkl(\nu||\mu)\ge p\left(\log\frac{n}{|G|r_n^2}-(2\log 2)\right)\ge \frac{1}n\left(\frac{1}2\log n\right)\]
for large enough $n$.
\end{proof}

\begin{lemma}\label{lemmaMbecs2}For all $\underline{n}\in B(n,H)$, we have
\[\frac{\det(M)}{\prod_{a\in G_+} m_a}\exp(-n\dkl(\nu_{\underline{n}}||\mu_{\underline{n}}))\le O\left(\frac{\log^{|G|}n}{\sqrt{n}}\right).\]
\end{lemma}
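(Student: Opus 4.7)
The plan is to combine the determinantal bound from Lemma \ref{lemmaMbecs} with the divergence lower bound from Lemma \ref{lemmaextralog}, splitting according to whether the hypothesis of Lemma \ref{lemmaextralog} holds. First I substitute $r_n = O(\log n)$ and $t_n/n = O(\sqrt{(\log n)/n})$ into Lemma \ref{lemmaMbecs} to obtain the baseline estimate
\[\frac{\det(M)}{\prod_{a\in G_+} m_a} = O\!\left(\log^{|G|-|H|} n \cdot \left(\frac{\log n}{n}\right)^{k/2}\right),\]
and then I multiply by $\exp(-n\dkl(\nu_{\underline n}||\mu_{\underline n}))$ in each of the two cases below.

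\emph{Case A: there exists $a\in G_+$ with $(-a+H)\cap G_+ = \emptyset$.} Lemma \ref{lemmaextralog} gives $\exp(-n\dkl) \le n^{-1/2}$. Multiplying the baseline by $n^{-1/2}$, the ratio against the target $\log^{|G|}n/\sqrt n$ equals $\log^{k/2-|H|}n / n^{k/2}$, which is $O(1)$: when $k=0$ it is $\log^{-|H|}n\le 1$ because $|H|\ge 1$, and when $k\ge 1$ the polynomial denominator dominates.

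\emph{Case B: for every $a\in G_+$, $(-a+H)\cap G_+\neq\emptyset$.} Here I observe a symmetry: if $a\in (g_i+H)\cap G_+$ then $-a+H=-g_i+H$ must meet $G_+$, so whenever $G_+$ touches $g_i+H$ it also touches $-g_i+H$ (and vice versa). Consequently $k=h$. Since $\underline n\in D_n'\subset D_n$, the set $G_+$ generates $G$; combined with $H\neq G$ this forces $h\ge 1$, hence $k\ge 1$. Using only the trivial bound $\exp(-n\dkl)\le 1$, the baseline already supplies a factor $t_n/n=O(\sqrt{(\log n)/n})$, and so
\[\frac{\det(M)}{\prod_{a\in G_+} m_a}\exp(-n\dkl) \le O\!\left(\log^{|G|-|H|+1/2}n\cdot n^{-1/2}\right) \le O\!\left(\frac{\log^{|G|}n}{\sqrt n}\right),\]
where the last inequality uses $|H|\ge 1$.

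The main conceptual step is this dichotomy: either some element of $G_+$ is \emph{unpaired} modulo $H$, in which case Lemma \ref{lemmaextralog} produces the crucial $n^{-1/2}$; or \emph{all} elements are paired, which forces $k\ge 1$ and then Lemma \ref{lemmaMbecs} by itself already supplies the $n^{-1/2}$ through $(t_n/n)^k$. The rest is bookkeeping of $\log n$ powers, which works out cleanly because $|H|\ge 1$. I expect the only mild friction to be the verification that the "paired" scenario actually forces $k=h$ and that $h\ge 1$ follows from $G_+$ generating $G$; both are short combinatorial checks rather than real obstacles.
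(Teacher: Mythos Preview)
Your proof is correct and essentially matches the paper's argument: the paper splits into the cases $k=0$ versus $k>0$, while you split on whether the hypothesis of Lemma~\ref{lemmaextralog} holds (equivalently $k<h$ versus $k=h$), but the substance is identical---invoke Lemma~\ref{lemmaextralog} when an unpaired element exists, and otherwise use $k\ge 1$ in Lemma~\ref{lemmaMbecs} together with Gibbs' inequality. Your Case~A even covers the overlap $0<k<h$ with a slightly stronger bound than needed, which is harmless.
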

\begin{proof}
As before let $k$ be the number of $1\le i\le h$ such that $G_+$ intersect both $g_i+H$ and $-g_i+H$. First assume that $k=0$. Since $G_+$ generates $G$, we can choose $a\in G\backslash H$. Since $k=0$, this $a$ satisfies the conditions of Lemma~\ref{lemmaextralog}, so combining this lemma with Lemma~\ref{lemmaMbecs}
\[\frac{\det(M)}{\prod_{a\in G_+} m_a}\exp(-n\dkl(\nu_{\underline{n}}||\mu_{\underline{n}}))\le O(r_n^{|G|-|H|})\exp\left(-\frac{\log n}2\right)\le O\left(\frac{\log^{|G|}n}{\sqrt{n}}\right).\]
If $k>0$, then using  Lemma~\ref{lemmaMbecs} and  Gibbs' inequality (Lemma~\ref{Gibbs}), we have
\[\frac{\det(M)}{\prod_{a\in G_+} m_a}\exp(-n\dkl(\nu_{\underline{n}}||\mu_{\underline{n}}))\le O\left(r_n^{|G|-|H|}\frac{t_n}{n}\right)\le O\left(\frac{\log^{|G|}n}{\sqrt{n}}\right).\qedhere\]
\end{proof}

Let $\underline{n}\in B(n,H)$. By the Stirling approximation \cite[ (8.22)]{rudinprinciples}, we have
\[\alpha(\underline{n})=O\left(\frac{\sqrt{n}}{\prod_{a\in G_+} \sqrt{n_a}}\right)=O\left(\frac{\sqrt{n}}{\prod_{a\in H} \sqrt{n_a}}\right)=O\left(\frac{\sqrt{n}}{ \sqrt{n/|H|-t_n}^{|H|}}\right)=O\left(\frac{1}{\sqrt{n}^{|H|-1}}\right).\]

Combining this with \eqref{eqEealpha} and Lemma \ref{lemmaMbecs2}, we get that
\[E(\underline{n})=O\left(\frac{\log^{|G|}n}{\sqrt{n}^{|H|}}\right).\]

Note that $B(n,H)$ is a subset of
\begin{multline*}\Bigg\{\underline{n}\in \mathbb{Z}^G\,:\, \left|n_a-\frac{n}{|H|}\right|\le t_n\text{ for all $0\neq a\in H$, }\\0\le n_a\le r_n\text{ for all $a\in G\backslash H$ and } n_0=n-\sum_{0\neq a\in G}n_a \Bigg\}.\end{multline*}

The size of the set above is $O\left(t_n^{|H|-1}r_n^{|G|-|H|}\right)=O\left(\sqrt{n}^{|H|-1}\log^{|G|}n\right)$.
So
\[\sum_{\underline{n}\in B(n,H)} E(\underline{n})=O\left(\sqrt{n}^{|H|-1}\log^{|G|}n\right)O\left(\frac{\log^{|G|}n}{\sqrt{n}^{|H|}}\right)=O\left(\frac{\log^{2|G|}n}{\sqrt{n}}\right)=o(1). \]

\subsection{Using Laplace's method to prove \eqref{HeG}}\label{secLaplace}

In this section, we use a variant of multivariable Laplace's method \cite[Section 6.2]{barndorff1989asymptotic} to evaluate the limit in \eqref{HeG}.

Given $(\nu(a))_{a\in G\backslash \{0\}}$, let \[\nu(0)=1-\sum_{a\in G\backslash \{0\}}\nu(a)\quad\text{ and }\quad\mu(a)=\sum_{b\in G} \nu(b)\nu(-a-b).\] 

Let $f:\mathbb{R}^{G\backslash \{0\}}\to \mathbb{R}$ be the $|G|-1$ variable function that maps $(\nu(a))_{a\in G\backslash \{0\}}$ to $ \dkl(\nu||\mu)$. 
Let $\mathbbm{1}$ be the all one vector. 

\begin{lemma}\label{lemmaderiv}
We have $f(\mathbbm{1}/|G|)=0$, the gradient of $f$ at $\mathbbm{1}/|G|$ is $0$, and the Hessian matrix of $f$ at $\mathbbm{1}/|G|$ is the $(|G|-1)\times (|G|-1)$ matrix $Q$, where the diagonal entries are $2|G|$, the offdiagonal-entries are $|G|$. 
\end{lemma}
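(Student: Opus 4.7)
The plan is a direct Taylor expansion of $f$ about the uniform distribution. For the first two claims, note that at $\nu=\mathbbm{1}/|G|$ we have $\mu(a)=\sum_{b\in G}(1/|G|)(1/|G|)=1/|G|$ for every $a\in G$, so $\nu=\mu$ and hence $f(\mathbbm{1}/|G|)=\dkl(\nu||\nu)=0$. Since Gibbs' inequality (Lemma~\ref{Gibbs}) gives $f\ge 0$ on the interior of the simplex, the point $\mathbbm{1}/|G|$ is a local minimum of $f$, so the gradient vanishes there.

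For the Hessian I would pick a tangent direction $w\in\mathbb{R}^G$ with $\sum_a w(a)=0$ and expand $f$ along $\nu_t=\mathbbm{1}/|G|+tw$. The key observation is that the linear part of $\mu_t$ vanishes: the coefficient of $t$ in $\mu_t(a)=\sum_b \nu_t(b)\nu_t(-a-b)$ equals $(2/|G|)\sum_b w(b)=0$, so
\[\mu_t(a)=\frac{1}{|G|}+t^2\gamma(a)+O(t^3),\qquad \gamma(a)=\sum_{b\in G} w(b)w(-a-b),\]
and moreover $\sum_a\gamma(a)=\sum_b w(b)\sum_a w(-a-b)=0$. Expanding the two logarithms, multiplying by $\nu_t(a)=1/|G|+tw(a)$ and summing over $a$, the first-order term vanishes because $\sum_a w(a)=0$ and the $\gamma$-contribution vanishes because $\sum_a\gamma(a)=0$, leaving
\[f(\nu_t)=\frac{t^2 |G|}{2}\sum_{a\in G} w(a)^2+O(t^3).\]

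It remains to express this quadratic form in the free coordinates $(v_a)_{a\neq 0}$ on $\mathbb{R}^{G\setminus\{0\}}$, where $v_a=w(a)$ for $a\neq 0$ and $w(0)=-\sum_{a\neq 0} v_a$. This gives
\[|G|\sum_{a\in G}w(a)^2=|G|\sum_{a\neq 0}v_a^2+|G|\Bigl(\sum_{a\neq 0}v_a\Bigr)^2,\]
and reading off the coefficients of $v_a^2$ and $v_a v_b$ one recovers exactly the matrix $Q$ with diagonal entries $2|G|$ and off-diagonal entries $|G|$. The only even slightly delicate step in the whole argument is the cancellation of the linear term in $\mu_t$; once that is in hand, everything else is routine bookkeeping.
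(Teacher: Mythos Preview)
Your argument is correct. The value and gradient claims are handled cleanly (Gibbs' inequality plus interior minimum is a nice way to avoid computing $\partial_a f$), and your directional second-order expansion is right: with $\nu_t=\mathbbm{1}/|G|+tw$ and $\sum_a w(a)=0$, one has exactly $\mu_t(a)=1/|G|+t^2\gamma(a)$, and tracking the $t^2$ terms gives $(|G|/2)\sum_a w(a)^2$ after the two cancellations you identify. Rewriting in the free coordinates $v_a=w(a)$ for $a\neq 0$ then yields $v^TQv$ on the nose, which pins down the Hessian since a quadratic form determines its symmetric matrix.

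The paper takes the opposite route: in the appendix it simply writes out $\partial_a f$ and $\partial_a\partial_b f$ explicitly and then evaluates at the uniform point. That approach is entirely mechanical but lengthy, and one has to be careful with all the chain-rule terms coming from the dependence of $\nu(0)$ and of every $\mu(c)$ on $\nu(a)$. Your approach is more conceptual: the single observation that the linear part of $\mu_t$ vanishes (because $\sum_b w(b)=0$) kills most of the computation, and the further fact $\sum_a\gamma(a)=0$ removes the only remaining $\mu$-contribution at order $t^2$. The paper's brute-force derivatives have the advantage of giving the gradient and Hessian at \emph{every} point (not just at $\mathbbm{1}/|G|$), which is not needed here; your method buys a much shorter proof tailored to the point of interest.
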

\begin{proof}
    The proof of this lemma is a straightforward calculation, more details given in the appendix.
\end{proof}

Note that $Q$ is positive definite, and $\det Q=|G|^{|G|}$.

If $x\in \mathbb{R}^{G\backslash\{0\}}$ such that $\|x\|_\infty\le \frac{t_n}{n}$, then from Taylor's theorem
\begin{equation}\label{Taylor}
    nf\left(\frac{1}{|G|}\mathbbm{1}+x\right)=\frac{n}2 x^TQ x+O\left(\frac{t_n^3}{n^2}\right)=\frac{n}2 x^TQ x+o(1).
\end{equation}

Let $\underline{n}=\frac{n}{|G|}\mathbbm{1}$, and let $m=m_{\underline{n}}=\frac{n}{|G|}\mathbbm{1}$, and $M=M_{\underline{n}}$ as in \eqref{madef} and \eqref{Mdef}. Then $M$ is a $|G|\times |G|$ matrix, the diagonal entries of $M$ are  equal to $n^2\left(\frac{2}{|G|^2}+\frac{1}{|G|}\right)$ and the off diagonal entries are equal to $\frac{2n^2}{|G|^2}$, so \[\det(M)=3n^{2|G|}\frac{1}{|G|^{|G|}}.\] Thus,
\[\frac{\det(M)}{3\prod_{a\in G}m_a}=1.\]

By continuity, for  $\underline{n}\in B(n,G)$ and  $m=m_{\underline{n}}$, $M=M_{\underline{n}}$, we have
\begin{equation}\label{eqMcont}\frac{\det(M)}{3\prod_{a\in G}m_a}=1+o(1).\end{equation}

Using the Stirling approximation \cite[(8.22)]{rudinprinciples}, for $\underline{n}\in B(n,G)$, we have
\begin{equation}\label{eqalphaappr}
\alpha(\underline{n})=(1+o(1))\frac{\sqrt{|G|}^{|G|}}{\sqrt{2\pi n}^{|G|-1}}.    
\end{equation}

Combining \eqref{eqEealpha}, \eqref{Taylor}, \eqref{eqMcont} and \eqref{eqalphaappr}  we see $\underline{n}\in B(n,G)$
\[E(\underline{n})=(1+o(1))\frac{\sqrt{|G|}^{|G|}}{\sqrt{2\pi n}^{|G|-1}}\exp\left(-\frac{1}2 y^TQy\right),\]
with $y=\frac{P(\underline{n})-\mathbbm{1}/|G|}{\sqrt{n}}$, where $P(\underline{n})$ is the projection of $\underline{n}$ to the components indexed by $G\backslash \{0\}$.

Let \[K_n=\left\{\frac{P(\underline{n})-\mathbbm{1}/|G|}{\sqrt{n}}\,:\,\underline{n}\in B(n,G)\right\}.\]

Then
\[\sum_{\underline{n}\in B(n,G)} E(\underline{n})=(1+o(1))\sum_{y\in K_n}\frac{\sqrt{|G|}^{|G|}}{\sqrt{2\pi n}^{|G|-1}}\exp\left(-\frac{1}2 y^TQy\right). \]

The sum on the right hand side is a Riemann sum of the Gaussian integral
\[\int \frac{\sqrt{|G|}^{|G|}}{\sqrt{2\pi}^{|G|-1}}\exp\left(-\frac{1}2 y^TQy\right)=\sqrt{|G|}^{|G|}\frac{1}{\sqrt{\det{Q}}}=1.\]

So
\begin{equation}\label{eqRiemann}
    \lim_{n\to \infty} \sum_{y\in K_n}\frac{\sqrt{|G|}^{|G|}}{\sqrt{2\pi n}^{|G|-1}}\exp\left(-\frac{1}2 y^TQy\right)=1,
    \end{equation}
see the appendix for details.
Therefore,
\[\lim_{n\to\infty}\sum_{\underline{n}\in B(n,G)} E(\underline{n})=1.\]

\begin{remark}\label{remark2}
    When we proved \eqref{HeG}, we never used that $|G|$ is coprime to $6$. Thus, \eqref{HeG} is true for any finite abelian group $G$. Let $G=\mathbb{Z}/2\mathbb{Z}$. Let $\underline{n}\in \mathbb{Z}^G$ be such that $n_{0+2\mathbb{Z}}=n-1$, and $n_{1+2\mathbb{Z}}=1$. Then \eqref{elsoE} gives us that
    \begin{align*}E(\underline{n})&=n\frac{n^{-2n}}3\det\begin{pmatrix} 3n^2-6n+4&2\sqrt{n-1}\\2\sqrt{n-1}&4n-4
    \end{pmatrix} (n^2-2n+2)^{n-2}\\&=\frac{4(n-1)^3}{n^3}\left(1-\frac{2}n+\frac{2}{n^2}\right)^{n-2}=4e^{-2}+o(1).\end{align*}
    Thus, $\mathbb{E} |\Sur(\cok(A_n),G)|\ge 1+4e^{-2}+o(1)$.
    Although having bigger moments does not rule out Cohen-Lenstra limiting behaviour, in our opinion it strongly suggests that the $2$-Sylow subgroup of $\cok(A_n)$ is not Cohen-Lenstra distributed. (Note that there are distributions arbitrary close to the Cohen-Lenstra distribution in total variation with arbitrary large moments. Thus, in order to rule out the Cohen-Lenstra limiting distribution one would also need the uniform integrability of $|\Sur(\cok(A_n),G)|$, which might be possible to establish using the second moment method, but we have not pursued this question.)
\end{remark}

\appendix
\section{Appendix}
\subsection{Details of the proof of \ref{lemmaderiv}}
 We use $\partial_a$ for the partial derivative with respect to $\nu(a)$. By simple calculus
\[\partial_a f=\log(\nu(a))-\log(\nu(0))-\log(\mu(a))+\log(\mu(0))-\sum_{c\in G}\frac{\nu(c)}{\mu(c)}2(\nu(-a-c)-\nu(-c)),\]

and
\begin{align*}\partial_a\partial_a f&=\frac{1}{\nu(a)}+\frac{1}{\nu(0)}-\frac{2(\nu(-2a)-\nu(-a))}{\mu(a)}+\frac{2(\nu(-a)-\nu(0))}{\mu(0)}\\&-2\frac{\nu(-2a)}{\mu(-2a)}+2\frac{\nu(-a)}{\mu(-a)}+2\frac{\nu(-a)}{\mu(-a)}-2\frac{\nu(0)}{\mu(0)}\\&-\frac{1}{\mu(a)}2(\nu(-2a)-\nu(-a))+\frac{1}{\mu(0)}2(\nu(-a)-\nu(0))\\&+\sum_{c\in G}\frac{\nu(c)}{\mu(c)^2}4(\nu(-a-c)-\nu(-c))^2.\end{align*}

Assuming that $a\neq b$, we have
\begin{align*}\partial_b\partial_a f&=\frac{1}{\nu(0)}-\frac{2(\nu(-a-b)-\nu(-a))}{\mu(a)}+\frac{2(\nu(-b)-\nu(0))}{\mu(0)}\\&-2\frac{\nu(-a-b)}{\mu(-a-b)}+2\frac{\nu(-a)}{\mu(-a)}+2\frac{\nu(-b)}{\mu(-b)}-2\frac{\nu(0)}{\mu(0)}\\&-\frac{1}{\mu(b)}2(\nu(-a-b)-\nu(-b))+\frac{1}{\mu(0)}2(\nu(-a)-\nu(0))\\&+\sum_{c\in G}\frac{\nu(c)}{\mu(c)^2}4(\nu(-a-c)-\nu(-c))(\nu(-b-c)-\nu(-c)).
\end{align*}

\subsection{The proof of \eqref{eqRiemann}}

Let $f_n:\mathbb{R}^{G\backslash\{0\}}\to\mathbb{R}$ be defined as
\[f_n(x)=\begin{cases}\frac{\sqrt{|G|}^{|G|}}{\sqrt{2\pi}^{|G|-1}}\exp\left(-\frac{1}2 y^TQy\right)&\text{if }x\in y+\left[0,\frac{1}{\sqrt{n}}\right)^{G\backslash\{0\}}\text{ for some }y\in K_n,\\0&\text{otherwise.}\end{cases}\]

Note that 
\[\sum_{y\in K_n}\frac{\sqrt{|G|}^{|G|}}{\sqrt{2\pi n}^{|G|-1}}\exp\left(-\frac{1}2 y^TQy\right)=\int f_n.\]

The functions $f_n$ converge to $\frac{\sqrt{|G|}^{|G|}}{\sqrt{2\pi}^{|G|-1}}\exp\left(-\frac{1}2 x^TQx\right)$ pointwise. 

Since $Q$ positive definite, we have  $ y^TQy\ge c\|y\|_2^2$ for some $c>0$. Therefore, if $\|x-y\|_2^2\le |G|-1$, then
\[y^TQy\ge c\|y\|_2^2\ge c\left|\|x\|_2-\sqrt{|G|-1}\right|_+^2\]
where $|\cdot|_+$ denotes the positive part. Thus, the functions $f_n$ are dominated by the integrable function
\[\frac{\sqrt{|G|}^{|G|}}{\sqrt{2\pi}^{|G|-1}}\exp\left(-\frac{c}2\left|\|x\|_2-\sqrt{|G|-1}\right|_+^2\right).\]

Therefore, \eqref{eqRiemann} follows from the dominated convergence theorem.

\bibliography{references}
\bibliographystyle{plain}

\bigskip

\noindent Andr\'as M\'esz\'aros, \\
HUN-REN Alfr\'ed R\'enyi Institute of Mathematics,\\ {\tt meszaros@renyi.hu}

\end{document}